\newtheorem{theorem}{Theorem}
\newtheorem{lemma}[theorem]{Lemma}
\newtheorem{proposition}[theorem]{Proposition}
\newtheorem{corollary}[theorem]{Corollary}
\theoremstyle{definition}
\newtheorem{definition}[theorem]{Definition}
\newtheorem{example}[theorem]{Example}
\theoremstyle{remark}
\newtheorem{remark}[theorem]{Remark}
\def\textmatrix#1&#2\\#3&#4\\{\bigl({#1 \atop #3}\ {#2 \atop #4}\bigr)}
\def\Aut{\operatorname{Aut}}
\def\reg{\operatorname{reg}}
\def\Int{\operatorname{int}}
\def\C{\mathbb{C}}
\def\D{\mathbb{D}}
\def\T{\mathbb{T}}
\newcommand{\DD}{{\mathbb D}}
\renewcommand{\phi}{\varphi}
\subjclass[2000]{32D15, 32F45}
\begin{document}

\title{
Extension property and universal sets}

\address{Institute of Mathematics, Faculty of Mathematics and Computer Science, Jagiellonian
University,  \L ojasiewicza 6, 30-348 Krak\'ow, Poland}

\author{\L ukasz Kosi\'nski}\email{lukasz.kosinski@uj.edu.pl}

\author{W\l odzimierz Zwonek}\email{wlodzimierz.zwonek@uj.edu.pl}
\thanks{The first author is partially supported by NCN grant SONATA BIS no.  2017/26/E/ST1/00723.
The second author is partially supported by the OPUS grant no. 2015/17/B/ST1/00996 of the National Science Centre, Poland}
\keywords{Extension set, Carath\'eodory set, Lempert theorem, universal set for the Carath\'eodory extremal problem}

\begin{abstract} 
Motivated by works on extension sets in standard domains we introduce a notion of the Carath\'eodory set that seems better suited for the methods used in 
proofs of results on characterization of extension sets. A special stress is put on a class of two dimensional submanifolds in the tridisc which not only turns out to be Carath\'eodory but also provides examples of two dimensional domains for which the celebrated Lempert Theorem holds. Additionally, a recently introduced notion of universal sets for the Carath\'eodory extremal problem is studied and new results on domains admitting (no) finite universal sets are given. 
\end{abstract}



\maketitle

\section{Extension property and Carath\'eodory sets}
\subsection{Introduction and state of affairs}
For a set $V\subset D$, where $D$ is a domain in $\C^n$ we denote by $\mathcal O(V)$ the set of {\it holomorphic functions} defined on $V$ as the set of all $f:V\to\C$ such that for arbitrary $w\in V$ there are an open neighborhood $W$ of $w$ in $D$ and a holomorphic $g:W\to\C$ such that $f$ coincides with $g$ on $V\cap W$. By $H^{\infty}(V)$ we mean the algebra of bounded holomorphic functions on $V$. In what follows many results could be formulated and proven for any algebras of holomorphic functions on $V$ containing polynomials; however we restrict ourselves to the special case of the algebra of bounded holomorphic functions. Additionally, for the simplicity of formulations and clarity of presentation we always assume that the set $V$ has analytic structure in the sense that $V$ is always to be assumed to be an analytic set in the given domain $D$. This means that $V$ is relatively closed in $D$ and for every point $x\in V\subset D$ there exist an open set $U\subset D$ containing $x$, and $f_1,\ldots,f_m\in \mathcal O(U)$ such that $V\cap U = \{ z \in U:\ f_j(z) = 0,\ j=1,\ldots, m\}$.

The analytic set $V\subset D$ {\it has the extension property} if for any $f\in H^{\infty}(V)$ there is an $F\in H^{\infty}(D)$ such that $F\equiv f$ on $V$ and $||F||_D=||f||_V$.

The origin of the problem of the existence of norm preserving extensions of bounded holomorphic functions goes back to Rudin's book (\cite{Rud 1969}, Theorem 7.5.5). The key step in that area of research can be found in \cite{Agl-McC 2003} where the problem was solved for the bidisc. More precisely, the following result was proven.

\begin{theorem}{\rm (see \cite{Agl-McC 2003})}\label{theorem:bidisc-annals} Let $V$ be a relatively polynomially convex subset of $\D^2$. Then $V$ has the extension property if and only if it is a retract.
\end{theorem}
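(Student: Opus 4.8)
The plan is to treat the two implications separately. For the direction that a retract has the extension property, suppose $r\colon \D^2\to V$ is a holomorphic retraction, so that $r|_V=\id_V$ and hence $V\subseteq r(\D^2)\subseteq V$, i.e.\ $r(\D^2)=V$. Given $f\in H^\infty(V)$, I would set $F:=f\circ r$. Because holomorphy of $f$ on $V$ was defined via local holomorphic extensions, $f\circ r$ is holomorphic on $\D^2$; moreover $F|_V=f\circ \id_V=f$, and since $r$ maps $\D^2$ onto $V$ we get $\|F\|_{\D^2}=\sup_{V}|f|=\|f\|_V$. Thus all the content lies in the converse.

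For the converse I would first dispose of the degenerate dimensions. If $\dim V=2$ then $V=\D^2$ is trivially a retract. If $\dim V=0$ then $V$ is a discrete polynomially convex set, and the extension property forces $V$ to be a single point: given two distinct points $p,q\in V$, prescribing the value $1$ at the interior point $p$ and a value of the same modulus but different argument at $q$ would require a norm-one extension attaining its maximum modulus at the interior point $p$, forcing it to be constant by the maximum principle, a contradiction. Hence the essential case is $\dim V=1$, and one is reduced to analysing a one-dimensional analytic subset $V$ of $\D^2$.

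The heart of the matter is to manufacture a retraction out of the extension property, and my strategy is to produce it in the form $r=\sigma\circ\pi$, where $\sigma\colon\D\to V$ uniformizes $V$ and $\pi\colon\D^2\to\D$ is a holomorphic left inverse. The extension property equates the intrinsic and ambient Carath\'eodory pseudodistances on $V$: any function realizing the Carath\'eodory extremal on $V$ extends norm-preservingly to $\D^2$, so $c_V=c_{\D^2}|_V$ on $V$, the reverse inequality being automatic by restriction. Invoking Lempert's theorem on the convex domain $\D^2$, where $c_{\D^2}=k_{\D^2}$ and complex geodesics exist, I would argue that this isometric matching forces $V$ to be biholomorphic to $\D$ through a map $\sigma$ that is simultaneously a complex geodesic of $\D^2$. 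Granting this, $\sigma^{-1}\in H^\infty(V)$ with $\|\sigma^{-1}\|_V\le 1$, and extending it by hypothesis yields $\pi\in H^\infty(\D^2)$ with $\|\pi\|_{\D^2}\le 1$ and $\pi|_V=\sigma^{-1}$; the maximum principle places $\pi$ into $\D$. Then $r:=\sigma\circ\pi$ is a holomorphic self-map of $\D^2$ with image contained in $V$ and $r|_V=\sigma\circ\sigma^{-1}=\id_V$, i.e.\ a retraction.

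The main obstacle is precisely the structural step asserting that the pointwise isometry $c_V=c_{\D^2}|_V$ upgrades to a single global uniformizing geodesic, that is, that $V$ is a smooth simply connected geodesic disc and that $\sigma^{-1}$ is globally defined and bounded on all of $V$, rather than having merely local geodesics through pairs of points. Here the finer function- and operator-theory of the bidisc seems unavoidable: one must rule out multiply connected or singular $V$ and control global behaviour, which for $\D^2$ is governed by Pick interpolation and the Ando/Agler model for commuting contractions. I expect the two delicate points to be (i) deducing connectivity and simple connectivity of $V$ from the extension property, and (ii) promoting the metric equality to the existence of one global geodesic whose inverse is the restriction of a bounded holomorphic function on the whole bidisc.
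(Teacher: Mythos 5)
Your easy direction, your disposal of the zero- and two-dimensional cases, and your endgame (once $V$ is known to be a single complex geodesic, extend its left inverse by the extension property and compose to get a retraction) are all sound; in fact in the bidisc the endgame is even simpler than you make it, since the geodesic produced is, up to a permutation of coordinates, a graph $\{(\lambda,f(\lambda)):\lambda\in\D\}$ with $f\in\mathcal O(\D,\D)$, so $(z_1,z_2)\mapsto(z_1,f(z_1))$ is already the retraction and no appeal to the extension property is needed at that stage. The problem is that you have placed the entire content of the theorem into the step you explicitly do not prove: that the equality $c_V=c_{\D^2}|_{V\times V}$ forces a one-dimensional $V$ to be a single uniformized geodesic disc. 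Naming Pick interpolation and the Ando/Agler model as the likely tools is not an argument; as written, the central implication is asserted, not established, and this is a genuine gap rather than a routine omission.

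For comparison, the route taken here (Proposition~\ref{prop:left}, Lemma~\ref{lemma:balanced}, Theorem~\ref{th:infbid}, Corollary~\ref{th:bidisc}) fills exactly that hole, and does so for the a priori weaker Carath\'eodory-set hypothesis. The mechanism is a dichotomy on an irreducible one-dimensional component $W$. If some pair in $W$ (or some tangent direction) is \emph{balanced}, i.e.\ $\rho(z_1,w_1)=\rho(z_2,w_2)$, then after normalizing $z=0$ the explicit extremal $\tfrac{1}{2}(z_1+\bar\omega z_2)$ together with Proposition~\ref{prop:left} shows its image of $V$ is dense in $\D$, and \emph{relative polynomial convexity} -- a hypothesis your outline never uses, though the theorem fails without it -- forces the flat geodesic $\{(\lambda,\omega\lambda)\}$ into $V$. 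If no pair is balanced, one coordinate strictly dominates everywhere, $W$ is locally a graph over that coordinate, and an analytic-continuation argument (the family $\mathcal F$ of star-shaped extensions, with the strict inequality $\rho(z_2,w_2)<\rho(z_1,w_1)$ controlling the boundary behaviour) produces a global $f:\D\to\D$ with $|f(\lambda)|<|\lambda|$ whose graph lies in $W$. Finally, Corollary~\ref{th:bidisc} excludes the possibility you also leave open, namely that $V$ properly contains a geodesic without being all of $\D^2$: a point of $V$ off the geodesic creates uncountably many balanced pairs, hence uncountably many flat geodesics through it, forcing $V=\D^2$. Until you supply arguments of this kind (or import the operator-theoretic ones from \cite{Agl-McC 2003} in detail), your proposal is an outline of the strategy rather than a proof.
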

Recall that $V\subset D$ is a {\it retract} if there is a holomorphic map $r:D\to D$ such that its range is $V$ and $r_{|V}$ is the identity. It is an obvious observation that any retract has the extension property.

Later Kosi\'nski and McCarthy proved, relying on the Lempert theory, that the same statement as above holds for the class of two-dimensional strictly convex domains $D$ (see \cite{Kos-McC 2018}). They also showed some necessary form of the sets with the extension property in sufficiently smooth strongly linearly convex domains in higher dimensions. Such sets must be totally geodesic.
 
On the other hand in the paper \cite{Agl-Lyk-You 2018} the authors described the sets with the extension property in the symmetrized bidisc and they found out that in this case there are sets with the extension property that are not retracts.

Although the problem of the characterization of the extension sets in the simplest case of the polydisc has been studied it is very frustrating that only some partial results on that topic were obtained. In this context let us mention results in \cite{Guo-Hua-Wan 2008}, \cite{Bha}, \cite{Mac 2019}, \cite{Kos-McC 2019}.

The situation in the tridisc $\D^3$ was studied in \cite{Kos-McC 2019}.

As one looks at the proofs of results describing the extension sets in a series of papers in quite different situations a weaker form of the extension property is more natural to work with. Namely, the existence of norm preserving extensions of some extremal functions is essential. And in principle the results on the description of extension sets just mentioned may be generalized to that new notion. This is formally done in Section~\ref{section:improvement} where the proofs are given in a detailed and partially novel way in the bidisc only; in other cases they are merely outlined.

Studying the tridisc $\D^3$ the authors showed that one dimensional sets with the extension property are precisely retracts (that was later generalized to arbitrary polydiscs in \cite{Mac 2019}) and for two-dimensional sets with the extension property they found a necessary condition (\cite{Kos-McC 2019}, Theorem 6.1). 
The last form was, as indicated in \cite{Kos-McC 2019}, not sufficient for the set to have the extension property. Then authors considered a class of two-dimensional subsets that are uniqueness varieties for three dimensional and non-degenerate 3-point Pick interpolation problem in $\D^3$ (see \cite{Kos-McC 2019}, Remark 7.4 and \cite{Kos 2015}).  In our paper we shall show that these two-dimensional subsets do satisfy our new notion, but they are not retracts. This is surprising as such a phenomenon occurs neither in the bidisc nor in domains studied in the literature so far. Therefore, this class of two-dimensional algebraic subsets is one of the objects that attracts our attention. More precisely, we look at 
analytic submanifolds $M_{\alpha}$ defined as the sets
$$
\{(z_1, z_2, z_3)\in \mathbb D^3:\ \alpha_1 z_1 + \alpha_2 z_2 + \alpha_3 z_3 = \bar \alpha_1 z_2 z_3 + \bar \alpha_2 z_1 z_3 + \bar \alpha_3 z_1 z_2\},
$$ 
where $\alpha_1, \alpha_2 , \alpha_3\in \mathbb C$ are not all zeros.

\begin{remark}\label{rem:euiv}

i) Recall that a characterization of retracts in the polydisc as the sets being graphs of holomorphic functions over lower dimensional polydiscs comes from \cite{Hea-Suf 1981}. Note that if $\alpha_3\neq 0$, the surface $M_\alpha$ can be written as a graph of a function given by the formula \begin{equation}\label{eq:z_3} z_3 = \psi(z_1, z_2)=\omega \frac{ a z_1 + b z_2 - z_1 z_2}{\bar b z_1 + \bar a z_2 -1},\end{equation}
where $a= \alpha_1/\bar\alpha_3$, $b=\alpha_2/\bar\alpha_3$ and $\omega=\bar \alpha_3/\alpha_3 $. In particular, $M_\alpha$ is a graph of a function over the first two variables  from $\D^2$ if and only if $|a|+|b|\leq 1$ that is $|\alpha_1|+|\alpha_2|\leq |\alpha_3|$.
Therefore, the above-mentioned result of Heath and Suffridge \cite{Hea-Suf 1981} implies that the variety $M_\alpha$ is not a retract of the tridisc exactly when $|\alpha_{i_1}|+ |\alpha_{i_2}| > |\alpha_{i_3}|$ for all possible permutations $(i_1, i_2, i_3)$ of the set $\{1,2,3\}$ -- we shall say that such a {\it triple satisfies the triangle inequality}. 

ii) The family $\{M_{\alpha}\}$ is stable under automorphisms of $\D^3$ in the following sense: if $z\in M_\alpha$ and $m\in \Aut(\D^3)$ maps $z$ to $0$, then $m(M_\alpha) = M_\beta$ for some $\beta$. To prove this it is enough to consider the case $\alpha_3\neq 0$ (permute the coordinates, if necessary). Let us represent $M_\alpha$ as in \eqref{eq:z_3}. Note that the function $\psi$ is \emph{inner} in the following sense: $|\psi(z_1,z_2)|=1$ for almost all $z_1,z_2$ in the unit circle. Trivial computations show that $m$ transforms $M_\alpha$ to a surface of the form 
\begin{equation}\label{eq:z_3phi}
 z_3= \varphi(z_1,z_2):=\frac{A z_1 + B z_2 + C z_1 z_2}{D z_1 + E z_2 + F},
\end{equation}
where $F\neq 0$. It can be assumed that $F=1$. It is also simple to observe (properties of $m$) that $\varphi$ is inner. This fact is crucial for the rest of our reasoning.

The following observation is trivial: if 
\begin{equation}\label{eq:inner}  \lambda\mapsto \frac{\gamma_1 \lambda + \gamma_2}{1+ \gamma_3 \lambda}\end{equation} is inner (that is maps almost all points from the unit cirle to the unit circle), then $\gamma_3 = \gamma_1 \bar \gamma_2$ and either $|\gamma_1|=1$ or $|\gamma_2|=1$.

Let us apply this observation to $\lambda \mapsto \varphi(\lambda, \omega \lambda)$, where $\omega$ is a unimodular constant (almost arbitrary). If $\gamma_1$ appearing in \eqref{eq:inner} is unimodular, we get that $|C|=1$ and 
\begin{equation}\label{eq:z_3b} \varphi(z_1, z_2) = C \frac{\bar E z_1 + \bar D z_2 + z_1 z_2}{D z_1 + E z_2+1}.\end{equation}
If, in turn, $|\gamma_2|=1$, then either $B$ is unimodular and $\varphi(z_1, z_2) = B z_2$ or $A$ is unimodular and $\varphi(z_1, z_2) = A z_1$. Certainly the last two cases cannot occur (otherwise $M_\alpha$ would be of the form $z_3 = \omega_j z_j$ for some $j=1,2$). In particular, $m(M_\alpha)$ is of the form \eqref{eq:z_3b} and consequently can be written as $M_\beta$ for $\beta = (c \bar E, c \bar D, - \bar c)$, where $c^2=C$.

Note also that $\beta$ satisfies the triangle inequality if and only if $\alpha$ does. This is an immediate consequence of the fact that $m$ takes holomorphic retracts to holomorphic retracts and the previously mentioned description of such sets.

iii) All the results for $M_{\alpha}$ presented below are non-trivial precisely when the triple $\alpha$ satisfies the triangle inequality -- otherwise the sets are biholomorphic to the bidisc.

\end{remark}

The notion that is new and is basic in our paper is that of an (infinitesimally) Carath\'eodory set to be defined below. 
To define the objects we use notion of the Carath\'eodory pseudodistance (Carath\'eodory-Reiffen pseudometric) that is an example of a holomorphically invariant function. Since basic properties of holomorphically invariant functions are essential for us we ask the Reader to consult the book \cite{Jar-Pfl 2013} on the fundamental properties of these functions.

\bigskip

By $\rho$ we denote the hyperbolic metric on the unit disc $\D:=\{\lambda\in\C:|\lambda|<1\}$. We also denote $\T:=\partial\D$.

We define {\it the Carath\'eodory pseudodistance} 
$$c_V(z,w):=\sup \{\rho(F(z), F(w)):\ F\in \mathcal O(V, \mathbb D)\}.$$

Its infinitesimal version, {\it the Reiffen-Carath\'eodory pseudometric}, is defined below
$$
\gamma_V(w;X):=\sup\{|F^{\prime}(w)X|:F\in H^{\infty}(V),||F||\leq 1, F(w)=0\},
$$
where $w\in V$ is a regular point ($w\in V_{reg}$) and $X$ is an arbitrary vector from the tangent space $T_wV$.
 
A function $F$ for which the supremum in the definition above is attained is called {\it extremal} (respectively, {\it infinitesimally extremal}) for the pair $(z,w)$ 
(respectively, $(w;X)$).

Note that if $V$ is an analytic set in $D$ then $c_V(w,z)\geq c_D(w,z)$ for any $w,z\in V$ and $\gamma_V(w;X)\geq \gamma_D(w;X)$ for any $w\in V_{reg}$, $X\in T_wV$.

\begin{definition} Let $V$ be a set (an analytic variety) in a subdomain $D$ of $\C^n$. 

We say that $V$ is a {\it Carath\'eodory set} if $$c_D(z,w) = c_V(z,w)\quad \text{ for all } z,w\in V.$$

We say that $V$ is an {\it infinitesimal Carath\'eodory set} if $$\gamma_D(w;X)=\gamma_V(w;X)$$ for any regular point $w\in V_{reg}$ and $X\in T_wV$.

\end{definition}
 
As we have already announced the (infinitesimal) Carath\'eodory sets are the ones that admit the norm preserving extensions of (infinitesimally) extremal functions.

It is an elementary observation that if $V$ has the extension property then it is a Carath\'eodory set. Any Carath\'eodory set is an infinitesimal Carath\'eodory set. In Section~\ref{section:improvement} we briefly sketch how the known proofs on results describing extension sets presented above apply to the situation of the Carath\'eodory sets.  At this place note that any Carath\'eodory set must be connected.

\subsection{Link to the Lempert theory}
While trying to characterize the sets having the extension property (or Carath\'eodory sets) the impact of the Lempert theory of holomorphically invariant functions in convex domains turns out to be essential. First recall that for the domain $D$ and points $w,z\in D$ we define {\it the Lempert function}

\begin{multline}
l_D(w,z):=\\
\inf\{\rho(0,t):\exists f\in\mathcal O(\D,D) \text{ such that } f(0)=w,f(t)=z\}.
\end{multline}

Note that the definition of the Lempert function may be easily extended to arbitrary subsets $M\subset\C^n$ (with $D$ replaced by $M$) - in case there is no holomorphic mapping joining $w$, $z$ lying entirely in $M$ we define $l_M(w,z):=\infty$. We may also define the infinitesimal version of the Lempert function. We define
\begin{equation}
\kappa_V(w;X):=\inf\{|\lambda|:\exists f:\D\to V,f(0)=w,\lambda f^{\prime}(0)=X\},
\end{equation}
where $w\in V_{reg}$ and $X\in T_w V$. The function $\kappa_V$ is called {\it the Kobayashi-Royden pseudometric}.

We call a holomorphic mapping $f:\D\to D$ a {\it complex geodesic} if there is a holomorphic function $F:D\to\D$ such that $F\circ f$ is an automorphism of the unit disc. In particular, for any $\lambda,\mu\in\D$ we have the equality $\rho(\lambda,\mu)=c_D(f(\lambda),f(\mu))$, $\lambda,\mu\in\D$. We call the function $F$ to be {\it the left inverse of } the complex geodesic $f$. We also say that {\it the complex geodesic $f$ passes through} $w,z\in D$ (respectively, $(w;X)\in D\times\C^n$) if $w,z\in f(\D)$ (respectively, $w=f(\lambda)$ and $X$ is parallel to $f^{\prime}(\lambda)$ for some $\lambda\in\D$).

The main result in the Lempert theory is the following.

\begin{theorem}{\rm (see \cite{Lem 1981}, \cite{Lem 1982}).} Let $D$ be a convex domain in $\C^n$. Then $c_D\equiv l_D$. Moreover, if $D$ is also bounded, then for any two points $w,z\in D$ (respectively, $w\in D$, $X\in\C^n$) we may find a complex geodesic $f:\D\to D$ passing through $w,z$ (respectively, $(w;X)$).
\end{theorem}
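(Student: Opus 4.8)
The plan is to dispose of the easy inequality first and then concentrate all the work on the hard one, which amounts to producing complex geodesics. For the trivial direction, if $f:\D\to D$ is any analytic disc with $f(0)=w$, $f(t)=z$ and $F:D\to\D$ is holomorphic, then $F\circ f$ is a self-map of $\D$, so the Schwarz--Pick lemma gives $\rho(F(w),F(z))\le\rho(0,t)$; taking the supremum over $F$ and the infimum over $(f,t)$ yields $c_D\le l_D$, and the infinitesimal analogue is identical. The entire content of the theorem is therefore the reverse inequality $c_D\ge l_D$, equivalently the existence, for each pair $(w,z)$ or datum $(w;X)$, of a disc $f$ admitting a left inverse $F$ realizing equality: once such an $f$ is found one has $\rho(\lambda,\mu)=c_D(f(\lambda),f(\mu))\le l_D(f(\lambda),f(\mu))\le\rho(\lambda,\mu)$, collapsing every inequality at once.

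I would first reduce to the model case of a bounded, smoothly bounded, \emph{strongly} convex domain, approximating a general bounded convex $D$ from inside by an increasing sequence of such domains and recovering the general statement by a normal-family limit at the very end. On a strongly convex $D=\{r<0\}$ I would fix $w,z$ and minimize $\rho(0,t)$ over all competing discs, using Montel's theorem to extract an extremal disc $f$. The crucial (and technical) input at this stage is boundary regularity: one must show that the extremal $f$ extends smoothly to $\overline{\D}$ with $f(\T)\subset\partial D$, which comes from elliptic, Riemann--Hilbert-type regularity theory for the associated boundary value problem.

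A first-variation (Euler--Lagrange) computation then shows that the extremal $f$ is \emph{stationary}: there is a positive function $p$ on $\T$ such that $\zeta\mapsto\zeta\,p(\zeta)\,\partial r(f(\zeta))$ is the boundary trace of a $\C^n$-valued map holomorphic on $\D$. The main obstacle is to convert this boundary information into a global left inverse $F:D\to\D$ with $\|F\|\le 1$ and $F\circ f=\mathrm{id}_{\D}$: this is done by solving a linear Riemann--Hilbert boundary value problem built from the stationary data and then invoking convexity to guarantee that the resulting $F$ indeed maps $D$ into $\D$. Strong convexity is precisely what makes the relevant index and solvability conditions behave well, and it is here that the argument is genuinely deep rather than formal.

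Finally I would pass to the limit over the exhausting strongly convex domains, using uniform boundedness and normal families to produce a complex geodesic and its left inverse in the original bounded convex $D$, and repeat the construction verbatim in the infinitesimal setting (minimizing $|\lambda|$ subject to $f(0)=w$ and $\lambda f'(0)=X$) to obtain a geodesic through $(w;X)$. Combining these existence statements with the trivial inequality gives $c_D\equiv l_D$ at every pair of points and every point--vector datum, and the discs constructed along the way are exactly the asserted complex geodesics.
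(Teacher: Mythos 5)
The paper does not prove this theorem at all: it is quoted directly from Lempert's papers \cite{Lem 1981}, \cite{Lem 1982} and used as a black box, so there is no in-paper argument to compare yours against. Measured against Lempert's actual proof, your outline identifies the correct architecture: the Schwarz--Pick direction $c_D\le l_D$ is indeed trivial; the reduction to smoothly bounded strongly convex domains by exhaustion is the standard way to pass from the model case to general (even unbounded) convex domains; extremal discs are extracted by Montel; and the chain ``boundary regularity $\Rightarrow$ stationarity ($\zeta\mapsto\zeta p(\zeta)\partial r(f(\zeta))$ extends holomorphically) $\Rightarrow$ left inverse via an argument-principle/winding-number construction using the supporting hyperplanes'' is precisely Lempert's route.

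That said, what you have written is a road map, not a proof. The three steps that carry all the weight --- (i) the boundary regularity of extremal discs, (ii) the first-variation computation yielding the stationarity condition, and (iii) the solvability of the Riemann--Hilbert problem producing a single-valued $F:D\to\D$ with $F\circ f=\mathrm{id}_\D$ --- are each named and then deferred to ``elliptic regularity'' or ``convexity makes the index behave''; none is carried out, and each occupies many pages in \cite{Lem 1981}. In particular, for step (iii) the essential point you should at least state is that by convexity the real hyperplane through $f(\zeta)$ with conormal $\partial r(f(\zeta))$ supports $D$, so that $\operatorname{Re}\langle z-f(\zeta),\partial r(f(\zeta))\rangle<0$ for all $z\in D$ and $\zeta\in\T$; this sign condition is what lets the argument principle produce, for each $z$, a unique zero $\lambda=F(z)\in\D$ of $\lambda\mapsto\langle z-f(\lambda),\tilde f(\lambda)\rangle$ and hence a holomorphic left inverse. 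The final normal-families limit also needs the (routine but necessary) check that the limiting left inverses are nonconstant and still map into $\D$. So: right strategy, faithful to the literature, but the analytic core is asserted rather than established; as a self-contained proof it has genuine gaps exactly where the theorem is deep.
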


Actually for taut domains $D\subset\mathbb C^n$ (i.e. such that any sequence of holomorphic mappings $f_k:\D\to D$ is a normal family) the fact that $c_D$ coincides with $l_D$ is equivalent to the existence for any pair of points $w,z\in D$ of a complex geodesic passing through them. It is quite natural to call any taut domain $D$ such that $c_D\equiv l_D$ {\it a Lempert domain}. The Lempert theorem states that any bounded convex domain is a Lempert domain. Note that the complex geodesics in $D$ are proper holomorphic embeddings of the unit disc into the domain $D$.

\begin{definition} Let $V$ be an analytic subset of a domain $D$ in $\C^n$.

We say that $V$ is {\it totally geodesic} in $D$ if for any $z,w\in V$, $z\neq w$ there exists a complex geodesic $f:\mathbb D\to D$ passing through $z$ and $w$ that lies entirely in $V$.

We say that an analytic set $V$ in $D$ is {\it infinitesimally totally geodesic} if for any $w\in V_{reg}$ and any $X\in T_wV$ we find a complex geodesic $f:\D\to D$ such that $f(0)=w$, the vector $X$ is parallel to $f^{\prime}(0)$ and the image of $f$ lies entirely in $V$. 
\end{definition}

\begin{proposition}\label{prop:tot} Any (infinitesimally) totally geodesic set $V$ in a Lempert domain $D$ is an (infinitesimally) Carath\'eodory set.
\end{proposition}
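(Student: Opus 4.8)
The plan is to prove both statements by a ``sandwich'' argument. Since $V$ is an analytic set in $D$, the inequalities $c_V\geq c_D$ and $\gamma_V\geq\gamma_D$ hold automatically — as already noted in the excerpt, every function in $\mathcal O(D,\D)$ restricts to $V$ — so it suffices to establish the reverse inequalities $c_V(z,w)\le c_D(z,w)$ and $\gamma_V(w;X)\le\gamma_D(w;X)$. The whole point is that total geodesy supplies a complex geodesic $f:\D\to D$ lying inside $V$ which \emph{already} realizes the Carath\'eodory quantity for $D$; composing with an arbitrary competitor for $c_V$ (resp.\ $\gamma_V$) and invoking Schwarz--Pick then forces that competitor to be no larger than $c_D$ (resp.\ $\gamma_D$).

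\emph{Finite case.} First I would fix $z,w\in V$ with $z\neq w$ (the case $z=w$ being trivial) and use total geodesy to choose a complex geodesic $f:\D\to D$ with $f(\D)\subset V$ and $z=f(\lambda)$, $w=f(\mu)$. By the defining property of a complex geodesic recalled above, $\rho(\lambda,\mu)=c_D(f(\lambda),f(\mu))=c_D(z,w)$. Now take any $G\in\mathcal O(V,\D)$. Since $f$ maps $\D$ into $V$, the composition $G\circ f$ is a holomorphic self-map of $\D$, so the Schwarz--Pick lemma gives $\rho(G(z),G(w))=\rho((G\circ f)(\lambda),(G\circ f)(\mu))\le\rho(\lambda,\mu)=c_D(z,w)$. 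Taking the supremum over all such $G$ yields $c_V(z,w)\le c_D(z,w)$, and combined with the reverse inequality this gives equality.

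\emph{Infinitesimal case.} The argument is parallel. Fix $w\in V_{reg}$ and $X\in T_wV$, and use infinitesimal total geodesy to pick a complex geodesic $f:\D\to D$ with $f(\D)\subset V$, $f(0)=w$, and $X=c\,f'(0)$ for some scalar $c$. For any competitor $G\in H^\infty(V)$ with $\|G\|\le1$ and $G(w)=0$, the map $G\circ f:\D\to\D$ fixes the origin, so the Schwarz lemma gives $|(G\circ f)'(0)|\le1$; since $G'(w)X=c\,(G\circ f)'(0)$ this yields $|G'(w)X|\le|c|$. It remains to identify $|c|$ with $\gamma_D(w;X)$: replacing the left inverse $F$ of $f$ by $\phi^{-1}\circ F$, where $\phi=F\circ f\in\Aut(\D)$, I may assume $F\circ f=\mathrm{id}_\D$, so that $F'(w)f'(0)=1$ exhibits $F$ as a competitor and gives $\gamma_D(w;f'(0))\ge1$, while the contractivity of $\gamma_D$ under $f$ gives $\gamma_D(w;f'(0))\le\gamma_\D(0;1)=1$. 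Homogeneity of the metric in the vector variable then gives $\gamma_D(w;X)=|c|\,\gamma_D(w;f'(0))=|c|$, whence $\gamma_V(w;X)\le|c|=\gamma_D(w;X)$ and equality follows.

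\emph{Main obstacle.} There is no serious analytic difficulty here: everything reduces to Schwarz--Pick and to the definition of a complex geodesic. The only step that needs care is the bookkeeping in the infinitesimal case — one must normalize the left inverse so that $F\circ f=\mathrm{id}_\D$ and keep track of the scalar $c$ encoded in the word ``parallel'', using the homogeneity $\gamma_D(w;c\,f'(0))=|c|\,\gamma_D(w;f'(0))$ to pin down $\gamma_D(w;X)=|c|$. I would remark that the Lempert hypothesis on $D$ enters only through the definition of (infinitesimal) total geodesy, which is what furnishes the geodesics $f$; the inequalities themselves use nothing beyond the fact that $f$ is a complex geodesic in the sense defined above.
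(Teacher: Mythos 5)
Your proposal is correct and follows essentially the same route as the paper: compose an arbitrary competitor for $c_V$ (resp.\ $\gamma_V$) with the complex geodesic supplied by total geodesy and apply Schwarz--Pick, the reverse inequality being automatic. The only difference is that you carefully write out the infinitesimal case (normalizing the left inverse and tracking the scalar $c$), which the paper dismisses with ``goes along the same lines''; your bookkeeping there is correct.
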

\begin{proof} Choose distinct points $z,w\in D$ and a holomorphic function $F:V\to \DD$. Let $f$ be a complex geodesic passing through $z,w$ that lies in $V$. 
Choose $\lambda,\mu\in \DD$ so that $f(\lambda) = z$, $f(\mu)=w$. Then, by the Schwarz lemma, 
$$
\rho(F(z), F(w)) = \rho(F(f(\lambda), F(f(\mu)) \leq \rho(\lambda,\mu) = l_D (z,w) =c_D(z,w).
$$
Consequently, $c_V(z,w)\leq c_D(z,w)$ which finishes the proof in the first case. The proof in the infinitesimal case goes along the same lines so we skip it.
\end{proof}

\subsection{Main results} 
As we already announced the following theorem is one of the main results of our paper (see Theorem~\ref{thm:Viscar}):
\emph{the set $M_\alpha=\{z\in\mathbb D^3:\alpha_1 z_1+\alpha_2 z_2+ \alpha_3 z_3=\bar \alpha_3 z_1z_2+\bar \alpha_2 z_1z_3+\bar \alpha_1 z_2z_3\}$ is Carath\'eodory. Moreover, $$l_{M_{\alpha}}\equiv c_{M_{\alpha}}\equiv (c_{\D^3})_{|M_{\alpha}^2}.$$}

Although we do not know whether the sets $M_{\alpha}$ are always extension sets the above result gives not only a new insight into the understanding of the extension property and extends results of \cite{Kos-McC 2019} but also provides an interesting class of Lempert domains. To make the last statement clear we introduce a class of two dimensional subdomains in $\D^2$. For $a,b>0$ we define
\begin{equation}
D_{a,b}:=\{z\in\D^2:|F_{a,b}(z_1,z_2)|<1\},
\end{equation}
where $$F_{a,b}(z_1,z_2):=\frac{az_1+bz_2-z_1z_2}{az_2+bz_1-1}.$$ Note that the function $F_{a,b}$ is the one which gives a solution $z_3(z_1,z_2)$ of the equation defining $M_{\alpha}$ for suitably chosen $\alpha$. Similarly as above the domain $D_{a,b}$ is interesting precisely when the triple $\{a,b,1\}$ satisfies the triangle inequality.

A direct consequence of the above result is the following (see Theorem~\ref{theorem:lempert-theorem-example}): \emph{the domain $D_{a,b}$ is a Lempert domain.}

As we shall see later the domains $D_{a,b}$ are, under some obvious assumptions on $a$ and $b$, even not linearly convex (see Remark~\ref{remark:not-linearly-convex}). The existence of such a class of domains is interesting from the point of view of the Lempert theory, as the only domain  with all holomorphically invariant functions equal for which that equality could not be concluded from the Lempert theorem is the tetrablock (introduced in \cite{AWY}) --  see \cite{Edi-Kos-Zwo 2013}. Recall that the symmetrized bidisc was, at the time of its discovery, the first example posessing such a phenomenon  (see \cite{Cos 2004}, \cite{Agl-You 2004}). It turned out later that this domain can be exhausted by strongly linearly convex domains - that made possible to deduce the equality of all holomorphically invariant functions on the symmetrized bidisc from the Lempert theorem (see \cite{Pfl-Zwo 2012}).


\bigskip

In Section~\ref{section:universal-set} motivated by a recent paper \cite{Agl-Lyk-You 2018} we consider the universal sets for the Carath\'eodory problem, i.e. the sets $\mathcal C\subset\mathcal O(D,\D)$ which can replace the set $\mathcal O(D,\D)$ in the definition of the Carath\'eodory pseudodistance of the domain $D$. We remark that in dimension one under very mild and natural assumptions the existence of a finite universal set for the Carath\'eodory problem implies that the domain is the disc (Theorem~\ref{thm:extr-planar}). 

It is noted in \cite{Agl-Lyk-You 2018} that in dimension two the existence of a universal set with two elements requires the domain to be the bidisc. We generalize this result showing that the existence of a finite universal set lets the domain embed in the polydisc (possible of higher dimension) -- see Theorem~\ref{the:univer-fin}. Additionally, the domains $D_{a,b}\subset\C^2$ turn out to be examples of the ones admitting three and not two elements in the universal set. Therefore, the situation in dimension two differs from that in dimension one and there are other than the bidisc nice and non-trivial domains admitting a finite number of elements in the universal set - see Example~\ref{example:finite-universal-set}.

In Section~\ref{section:improvement} the main stress is put on extension and simplification of the situation in the bidisc, i.e. Theorem~\ref{theorem:bidisc-annals}, which is the content of Theorem~\ref{th:infbid} and Corollary~\ref{th:bidisc}. The characterization of Carath\'eodory sets in the situation of strictly convex, strongly linearly convex and the symmetrized bidisc is sketched only as the methods from \cite{Agl-Lyk-You 2019}, \cite{Kos-McC 2018} and \cite{Kos-McC 2019} apply in the general case word by word.

In Section~\ref{section:ball} we briefly discuss the problem of a possible structure of universal sets for the Carath\'eodory extremal problem in the Euclidean ball $\mathbb B_n$ and we show how to produce universal sets that are 'smaller' than the ones obtained by the most evident way.

\section{Carath\'eodory sets replace extension sets}\label{section:improvement}
This section studies connection between Carath\'eodory and extension sets. Its first aim is to describe Carath\'eodory sets in the bidisc showing that they are holomorphic retracts. It is thus trivial that both notions coincide there which, in particular, proves and extends \cite{Agl-McC 2003}. After a more detailed study of the case of the bidisc we sketch how the proofs in \cite{Agl-Lyk-You 2019}, \cite{Kos-McC 2018} and \cite{Kos-McC 2019} may be applied to get the results describing Carath\'eodory sets in the cases of strictly convex, strongly linearly convex domains and the symmetrized bidisc --- the proofs of (formally stronger) results follow exactly the same lines as in appropriate papers.

\begin{proposition}\label{prop:left} Let $D$ be a domain in $\mathbb C^n$ and $V\subset D$ an analytic set. Assume that $c_V(z,w) = c_D(z,w)> 0$ for some $z,w\in V$ (respectively, $\gamma_D(z;X) = \gamma_V(z;X)>0$, for some $z\in V_{reg}$ and $X\in T_zV$). If the function $F:D\to \mathbb D$ is extremal for $(z,w)$ (resp. for $(z;X)$), then $F(V)$ is dense in $\mathbb D$.
\end{proposition}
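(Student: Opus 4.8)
The plan is to argue by contradiction. Suppose $F(V)$ is not dense in $\mathbb{D}$. First I would record that the restriction $F|_V$ is extremal for $c_V$: since $F\in\mathcal{O}(D,\mathbb{D})$ we have $F|_V\in\mathcal{O}(V,\mathbb{D})$, and $\rho(F(z),F(w))=c_D(z,w)=c_V(z,w)$, so $F|_V$ already realises the supremum defining $c_V(z,w)$. (In the infinitesimal case, after normalising $F(z)=0$, the map $F|_V$ realises $\gamma_V(z;X)=|F'(z)X|$.) Because $F(V)$ is not dense, its closure omits an open ball $B(\zeta,r)\subset\mathbb{D}$.

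In the distance case I would then post-compose $F$ with an automorphism of $\mathbb{D}$ carrying $\zeta$ to $0$ (this preserves the extremality, as $\rho$ is invariant), so that, after renaming, $\overline{F(V)}$ omits a neighbourhood of the origin and hence $F(V)\subset\Omega:=\mathbb{D}\setminus\overline{B(0,\eps)}$ for some $\eps>0$. Put $a=F(z)$, $b=F(w)$; these are distinct since $c_D(z,w)>0$. The key point is that every $g\in\mathcal{O}(\Omega,\mathbb{D})$ produces a competitor $g\circ F|_V\in\mathcal{O}(V,\mathbb{D})$ (as $F^{-1}(\Omega)$ is an open neighbourhood of $V$ on which $g\circ F$ is holomorphic), whence
\begin{equation*}
c_V(z,w)\ \ge\ \sup_{g\in\mathcal{O}(\Omega,\mathbb{D})}\rho\bigl(g(a),g(b)\bigr)\ =\ c_\Omega(a,b).
\end{equation*}
Since $\Omega\subset\mathbb{D}$ gives $c_\Omega(a,b)\ge\rho(a,b)$, while extremality gives $c_V(z,w)=\rho(a,b)$, I would conclude $c_\Omega(a,b)=\rho(a,b)$. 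The infinitesimal case runs in exact parallel: keeping $F(z)=0$ and using $\Omega=\mathbb{D}\setminus\overline{B(\zeta,\eps)}\ni 0$, the competitors $g\circ F|_V$ with $g\in\mathcal{O}(\Omega,\mathbb{D})$ and $g(0)=0$ give $\gamma_V(z;X)\ge\gamma_\Omega(0;1)\,\gamma_V(z;X)$, that is $\gamma_\Omega(0;1)\le 1=\gamma_{\mathbb{D}}(0;1)$, hence $\gamma_\Omega(0;1)=1$.

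Thus everything reduces to a single strict-monotonicity statement for the Carath\'eodory data of planar domains: for the doubly connected domain $\Omega=\mathbb{D}\setminus\overline{B}$, whose complement in $\mathbb{D}$ is a genuine disc and hence of positive area, one has $c_\Omega(a,b)>\rho(a,b)$ for all distinct $a,b\in\Omega$, and correspondingly $\gamma_\Omega(w;X)>\gamma_{\mathbb{D}}(w;X)$. This is the step I expect to be the main obstacle. It is genuinely non-elementary: the two obvious competitors, the inclusion $\lambda\mapsto\lambda$ and the inversion $\lambda\mapsto\eps/\lambda$ swapping the two boundary circles, already fail to beat $\rho$ at points of the symmetric circle $\{|\lambda|=\sqrt{\eps}\}$, so a strict gain must come from genuinely multiply connected competitors (bounded holomorphic functions on $\Omega$ whose modulus is $1$ on both boundary circles, built from theta functions, equivalently the Ahlfors-type extremal functions).

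To establish the strict inequality I would invoke the classical fact that enlarging a planar domain by a set of positive capacity strictly decreases its Carath\'eodory distance and metric (see the theory of the Carath\'eodory metric on finitely connected planar domains in \cite{Jar-Pfl 2013}); the underlying mechanism is that the inclusion $\Omega\hookrightarrow\mathbb{D}$, equivalently the normalised M\"obius competitor carrying $a$ to $0$, is not an extreme point of the unit ball of $H^{\infty}(\Omega)$, since its modulus is $<1$ on the inner boundary circle, so it cannot be extremal for the two-point problem (respectively, the infinitesimal problem) on $\Omega$. Feeding the resulting strictly better function back through $F$ then contradicts the extremality of $F|_V$, which completes the proof in both the distance and the infinitesimal formulations.
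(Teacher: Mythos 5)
Your reduction is correct and is essentially the paper's: assume $F(V)$ misses a closed disc, observe that every $g\in\mathcal O(\D\setminus\bar\triangle,\D)$ yields a competitor $g\circ F|_V$, and conclude that the restricted M\"obius map would have to be a Carath\'eodory extremal for the punctured disc $\Omega=\D\setminus\bar\triangle$ --- so everything hinges on the strict inequality $c_\Omega(a,b)>\rho(a,b)$ (resp.\ $\gamma_\Omega(0;1)>1$). You correctly identify this as the crux; the paper disposes of it by citing Lemma~9.3 of \cite{Agl-Lyk-You 2019}, which produces an explicit $\beta:\D\setminus\bar\triangle\to\D$ strictly increasing the hyperbolic distance of the two given points.

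The gap is in your proposed justification of that crux. Your ``mechanism'' --- that the M\"obius competitor $m_a$ restricted to $\Omega$ is not an extreme point of the unit ball of $H^\infty(\Omega)$ because $|m_a|<1$ on the inner boundary circle --- is false. By the deLeeuw--Rudin criterion (valid on finitely connected domains), $f$ in the unit ball of $H^\infty$ is an extreme point as soon as $\int_{\partial\Omega}\log(1-|f^*|)\,|dz|=-\infty$; since $|m_a|\equiv 1$ on the whole outer circle, this integral diverges and $m_a|_\Omega$ \emph{is} an extreme point, so no contradiction is obtained this way. (The implication ``not an extreme point $\Rightarrow$ not extremal'' is fine, via the uniqueness of planar extremals as in Proposition~\ref{prop:extr-planar}, but its hypothesis fails here.) The blanket appeal to ``removing a set of positive capacity strictly increases $c$'' is also imprecise: the relevant notion for $H^\infty$ is analytic, not logarithmic, capacity (there are non-polar compacta that are removable for bounded holomorphic functions), though for a closed disc this distinction is harmless. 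A correct route to the strict inequality is: by Fisher's theorem \cite{Fis 1969} the Carath\'eodory extremal of the doubly connected domain $\Omega$ is unique up to automorphisms of $\D$ and coincides with its Ahlfors function, which is a proper map of degree two onto $\D$; the injective map $m_a|_\Omega$ is not proper onto $\D$, hence not extremal, hence $c_\Omega(a,b)>\rho(a,b)$. Alternatively, simply quote Lemma~9.3 of \cite{Agl-Lyk-You 2019} as the paper does.
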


\begin{proof} Suppose that $F(V)$ is not dense in $\D$. Let $\emptyset\neq\bar\triangle\subset\D$, $\bar\triangle \cap F(V)=\emptyset$, be a closed disc.
In the first case the result follows from Lemma 9.3 in \cite{Agl-Lyk-You 2019} which gives the existence of a holomorphic function 
$\beta:\DD\setminus \bar\Delta\to \DD$ such that 
$$
\rho(F(z), F(w))< \rho(\beta(F(z)), \beta(F(w)))\leq c_D(z,w),
$$ which contradicts the extremality of $F$. The infinitesimal case follows the same idea (and makes use of the same Lemma 9.3 from \cite{Agl-Lyk-You 2019}). 
\end{proof}

Below we use the notion of balanced points introduced in \cite{Agl-McC 2003}.
A pair $(z;w)\in \D^2\times\D^2$, $z\neq w$ (respectively, $(z;X)\in\D^2\times\C^2$, $X\neq (0,0)$) is called {\it balanced} (respectively, {\it infinitesimally balanced}) if it satisfies the equality $\rho(z_1,w_1)=\rho(z_2,w_2)$ (respectively,
$\gamma_{\D}(z_1;X_1)=\gamma_{\D}(z_2;X_2)$). Note that being (infinitesimally) balanced is invariant under holomorphic automorphisms of $\D^2$ in the sense that if the pair $(z,w)$ (respectively, $(z;X)$) is (infinitesimally) balanced  then so is $(a(z);a^{\prime}(z)(X))$ (respectively, $(a(z),a(w))$) for any automorphism $a$ of $\D^2$. It follows from the Schwarz lemma that the (infinitesimally) balanced pair determines uniquely a complex geodesic passing through them -- in this case both components of the geodesic are automorphisms of the unit disc). 

\begin{lemma}\label{lemma:balanced} Let $V\subset\D^2$ be polynomially convex analytic subvariety. If there is a balanced pair $(z,w)$ such that $z,w\in V$ and $c_V(z,w) = c_{\DD^2}(z,w)$ (respectively, infinitesimally balanced pair $(z,X)\in V_{reg}\times T_zV$ and $\gamma_V(z;X) = \gamma_{\DD^2}(z;X)$) then $V$ contains the unique complex geodesic passing through them.
\end{lemma}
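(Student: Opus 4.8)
The plan is to put the (infinitesimally) balanced data into a normal form, read off a one‑parameter family of extremal functions on the whole bidisc, push this through Proposition \ref{prop:left} to obtain dense images in $\D$, and then use polynomial convexity to force the geodesic into $V$. I begin with the normalization: being (infinitesimally) balanced, the equality $c_V=c_{\D^2}$ (resp.\ $\gamma_V=\gamma_{\D^2}$) at the data, and relative polynomial convexity are all preserved under $\Aut(\D^2)$ (as recorded just before the statement), so after composing with a suitable automorphism I may assume $z=(0,0)$ and $w=(s,s)$ with $s\in(0,1)$ (resp.\ $z=(0,0)$ and $X=(1,1)$). In either case the unique complex geodesic through the data is the diagonal $f(\lambda)=(\lambda,\lambda)$, and the goal becomes $\{(\lambda,\lambda):\lambda\in\D\}\subset V$.

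Next I would exhibit the extremal functions. For every $t\in[0,1]$ set $F_t:=t\pi_1+(1-t)\pi_2$, where $\pi_1,\pi_2$ are the coordinate projections; then $F_t\colon\D^2\to\D$, $F_t(0,0)=0$, $F_t(s,s)=s$, and since $c_{\D^2}((0,0),(s,s))=\rho(0,s)$ each $F_t$ is extremal for the pair (and infinitesimally extremal for $(0;(1,1))$, as $|F_t'(0)(1,1)|=1=\gamma_{\D^2}(0;(1,1))$). Moreover each $F_t$ fixes the diagonal, $F_t(\lambda,\lambda)=\lambda$; in fact, applying the Schwarz lemma to $F\circ f$ shows that any extremal function normalized by $F(0,0)=0$, $F(s,s)=s$ restricts to the identity on the diagonal, which is the structural reason the diagonal is distinguished. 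Because $c_V=c_{\D^2}$ (resp.\ $\gamma_V=\gamma_{\D^2}$) at the data, the restriction of each $F_t$ to $V$ is extremal for $c_V$ (resp.\ $\gamma_V$), so Proposition \ref{prop:left} applies and yields that $F_t(V)$ is dense in $\D$ for every $t\in[0,1]$; in particular $\pi_1(V)$ and $\pi_2(V)$ are dense.

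Finally I would deduce $\{(\lambda,\lambda):\lambda\in\D\}\subset V$ from density together with polynomial convexity. Since $V$ is relatively closed and relatively polynomially convex in $\D^2$, it suffices to place each diagonal point in the polynomial hull of $V$, i.e.\ to prove $\sup_{\lambda\in\D}|P(\lambda,\lambda)|\le\sup_V|P|$ for every polynomial $P$. Concretely, for a fixed $\mu\in\D$ I would use density of $\pi_1(V)$ to pick $v^{(n)}\in V$ with first coordinate tending to $\mu$, pass to a subsequence so that the bounded second coordinates converge, and use that $V$ is relatively closed to obtain a point $(\mu,\eta)\in V$; then the remaining functions $F_t$, all of which fix the diagonal and have dense image, are to be combined with the separation supplied by polynomial convexity to force $\eta=\mu$. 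Once a point of $V$ lying over each $\mu$ on the diagonal is produced, uniqueness of the geodesic through the (infinitesimally) balanced data identifies the resulting disc in $V$ with $f$.

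The main obstacle is precisely this last upgrade: density of $\pi_1(V)$ controls only one coordinate, so a priori $V$ might meet the vertical lines $\{z_1=\mu\}$ only off the diagonal, or the second coordinates of the approximating points might run to $\partial\D$. Excluding both is exactly where relative polynomial convexity must enter in an essential way---separating a putative off‑diagonal limit of $V$ by a polynomial and contradicting that every $F_t$ both fixes the diagonal and has dense image in $\D$---and it is the step I expect to require the most care. The infinitesimal case runs in parallel, with the Schwarz--Pick lemma at $0$ replacing the Schwarz lemma and $\gamma$ replacing $c$ throughout.
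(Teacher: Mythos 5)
Your normalization and your use of Proposition~\ref{prop:left} match the paper's proof exactly up to the last step: after an automorphism you reduce to $z=(0,0)$, $w=(s,s)$ (resp.\ $X=(1,1)$), observe that the functions $F_t=t\pi_1+(1-t)\pi_2$ are extremal, and conclude that each $F_t(V)$ is dense in $\D$. The gap is the final step, which you leave as a placeholder, and the route you sketch for it cannot be made to work as stated. Knowing that $\pi_1(V)$ is dense and extracting a limit $(\mu,\eta)$ of points of $V$ gives you at best a point of the closure of $V$ in $\C^2$ --- the second coordinates may tend to $\T$, and $V$ is only relatively closed in $\D^2$ --- and nothing in the package ``each $F_t$ fixes the diagonal and has dense image'' localizes $\eta$: these are global statements about images, not about the fiber of $V$ over $\mu$. (Density of $\pi_1(V)$ and $\pi_2(V)$ alone is, for instance, satisfied by the antidiagonal.)

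The missing idea --- the one the paper's proof is actually built on --- is that a \emph{single} member of your family, $G:=F_{1/2}=\tfrac{1}{2}(z_1+z_2)$ (in the paper, $\tfrac12(z_1+\bar\omega z_2)$ before your extra rotation), has fibers that collapse onto the diagonal near the boundary. Writing $H:=\tfrac12(z_1-z_2)$, the parallelogram identity gives $|G(\zeta)|^2+|H(\zeta)|^2<1$ for $\zeta\in\D^2$, so every point of $V$ at which $|G|>r$ lies within distance $2\sqrt{1-r^2}$ of the diagonal. Hence for a polynomial $P$ with $\sup_V|P|\le 1$, expanding $P$ in the linear coordinates $(G,H)$ as $P=P(\cdot,0)+H\,Q$ yields $|P(g,0)|\le 1+\varepsilon(r)$ for all $g\in G(V)$ with $|g|>r$, where $\varepsilon(r)\to0$ as $r\to1$; density of $G(V)$ in $\D$ and the maximum principle applied to the one-variable polynomial $g\mapsto P(g,0)$ then give $|P|\le1$ on the whole diagonal. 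Thus the diagonal lies in the polynomial hull of $V$ intersected with $\D^2$, which equals $V$ by the assumed relative polynomial convexity. This is the only genuinely nontrivial point of the lemma and exactly the step your proposal defers; the infinitesimal case is identical once one notes that $G$ is infinitesimally extremal for $(0;(1,1))$.
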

\begin{proof} The proof is standard, we are recalling it for the sake of completeness.
Losing no generality we can assume that $z=(0,0)$.  Since $((0,0),w)$ (respectively, $((0,0);X)$) forms a balanced (respectively, infinitesimally balanced) pair we get  that $w_2=\omega w_1$ (respectively, $X_2=\omega X_1$) , $|\omega|=1$. 
Then making use of the fact that the function $\frac{z_1+\bar\omega z_2}{2}$ is (infinitesimally) extremal for $((0,0),w)$ (respectively, $((0,0);X)$), due to Proposition~\ref{prop:left} we get that the set $\{\frac{z_1+\bar \omega z_2}{2}:z\in V\}$ is dense in $\D$, so the polynomial convexity of $V$ implies that $\{(\lambda,\omega\lambda):\lambda\in\D\}\subset V$, which finishes the proof.
\end{proof}

\begin{theorem}\label{th:infbid} Let $V$ be an analytic subvariety of $\mathbb D^2$ that is polynomially convex. Then $V$ is an infinitesimal Carath\'eodory set if and only if it is a union of a discrete set and complex geodesics.
\end{theorem}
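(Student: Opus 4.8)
The plan is to prove both implications of the equivalence, treating the infinitesimal Carath\'eodory condition as the tangential analogue of Lemma~\ref{lemma:balanced}. First I would dispose of the easy direction: if $V$ is a union of complex geodesics and a discrete set, then for any regular point $w\in V_{reg}$ and any $X\in T_wV$, the point $w$ lies on one of the geodesics $g:\D\to\D^2$ whose image is contained in $V$, and $X$ is parallel to $g'(\lambda)$ for the appropriate $\lambda$. Then $V$ is infinitesimally totally geodesic, so Proposition~\ref{prop:tot} applies (with $D=\D^2$, which is a Lempert domain by the Lempert theorem) and yields that $V$ is an infinitesimal Carath\'eodory set. A small technical point to check here is that the regular points on a one-dimensional component are exactly the points lying on the geodesic, so that every relevant tangent direction $X$ is indeed realized by the geodesic direction.

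For the substantive direction I would assume $V$ is an infinitesimal Carath\'eodory set and argue that each positive-dimensional piece is swept out by geodesics. The key reduction is that at a regular point $w\in V_{reg}$ with a tangent vector $X$, one has $\gamma_{\D^2}(w;X)=\gamma_V(w;X)>0$ (the positivity is automatic since $\D^2$ is bounded, as long as $X\neq 0$). After composing with an automorphism of $\D^2$ I may assume $w=(0,0)$, and the equality of metrics forces the extremal function for $(w;X)$ in $\D^2$ to be of the balanced form. The plan is then to show that $(w;X)$ must be an \emph{infinitesimally balanced} pair: if $\gamma_\D(0;X_1)\neq\gamma_\D(0;X_2)$, i.e.\ $|X_1|\neq|X_2|$, the infinitesimal extremal in $\D^2$ is the coordinate projection onto the dominant variable, and I would use Proposition~\ref{prop:left} applied to this extremal to derive a contradiction with the analytic structure of $V$ — specifically, that $F(V)$ being dense together with polynomial convexity would force a whole coordinate disc into $V$, which is incompatible with $X$ being a genuine two-dimensional tangent direction unless the pair is balanced. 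Once I know the pair is infinitesimally balanced, the infinitesimal analogue of Lemma~\ref{lemma:balanced} gives that the unique complex geodesic through $(w;X)$ lies entirely in $V$.

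The remaining work is to assemble these pointwise geodesics into a global structure statement. Having shown that through every regular point and every tangent direction there passes a geodesic contained in $V$, I would argue that a positive-dimensional component of $V$ must actually be one-dimensional (a two-dimensional component would be all of $\D^2$, which is excluded for a proper subvariety), and that it coincides with the single geodesic through any of its points; the geodesics through different directions at the same regular point cannot all fit unless the local dimension is one. Points that are not regular and not on any geodesic then form the discrete set. The main obstacle I anticipate is the contradiction argument in the non-balanced case: one must carefully exploit that the coordinate projection is the \emph{unique} infinitesimal extremal when $|X_1|\neq|X_2|$, and combine the density conclusion of Proposition~\ref{prop:left} with polynomial convexity to force a coordinate disc into $V$, then show this is inconsistent with the prescribed tangent direction $X$ having both components nonzero. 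Handling the degenerate subcases (when one component $X_j$ vanishes, so the direction is already a coordinate direction) requires separate but easier treatment, since there the geodesic is itself a coordinate disc.
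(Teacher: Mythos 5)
Your easy direction and the balanced case are fine and agree with the paper, but the hard direction has a genuine gap: your plan is to show that \emph{every} pair $(w;X)$ with $w\in V_{\reg}$ and $0\neq X\in T_wV$ must be infinitesimally balanced, and this is false. Consider $V=\{(\lambda,\lambda/2):\lambda\in\mathbb D\}$. This is a single complex geodesic (the first projection is a left inverse), hence an infinitesimal Carath\'eodory set by the easy direction, yet at $(0,0)$ the tangent direction is $X=(1,1/2)$ with $\gamma_{\mathbb D}(0;X_1)=1\neq 1/2=\gamma_{\mathbb D}(0;X_2)$, so no pair on $V$ is balanced. Consequently the unbalanced case cannot be ruled out; it must be handled directly. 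Your proposed mechanism for the contradiction also does not work on its own terms: when $|X_1|>|X_2|$ the extremal is the coordinate projection $z\mapsto z_1$, and Proposition~\ref{prop:left} only tells you that $\pi_1(V)$ is dense in $\mathbb D$. Unlike the balanced case, where the level sets of $\tfrac{z_1+\bar\omega z_2}{2}$ of modulus close to $1$ localize near the closure of the geodesic $\{(\lambda,\omega\lambda)\}$ so that polynomial convexity captures that geodesic, the level sets of $z\mapsto z_1$ are entire discs $\{c\}\times\mathbb D$; density of $\pi_1(V)$ is satisfied by any graph over the first coordinate and forces nothing, so no coordinate disc is produced and no contradiction arises.

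What the paper does in the unbalanced case is quite different and is the real content of the proof: assuming (after normalization) that $\gamma_{\mathbb D}(z_2;X_2)<\gamma_{\mathbb D}(z_1;X_1)$ for all regular $z$ and all nonzero tangent vectors on the irreducible component $W$, it shows $W$ is locally a graph over the first coordinate with the contraction property $\rho(z_2,w_2)<\rho(z_1,w_1)$ for nearby points, and then runs an analytic-continuation/maximality argument (over star-shaped domains, controlling boundary behaviour via irreducibility of $W$ and the discreteness of the singular set) to produce a \emph{global} holomorphic $f:\mathbb D\to\mathbb D$ with $|f(\lambda)|<|\lambda|$ whose graph lies in $W$. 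That graph is itself a complex geodesic, so $W$ contains (hence equals) a geodesic. You would need to supply an argument of this kind; as written, your proof establishes the conclusion only for components carrying at least one balanced tangent pair.
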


\begin{proof}
The only difficult part is to show that any one-dimensional irreducible component $W$ of an infinitesimal Carath\'eodory set is (equivalently, contains) a complex geodesic. 

The case when there is a point $(z;X)\in W_{reg}\times (T_z W\setminus\{0\})$ that is infinitesimally balanced follows from Lemma~\ref{lemma:balanced}.

Suppose that no pair $(z;X)\in W_{reg}\times(T_z W\setminus\{0\})$ is balanced. Then without loss of generality we may assume that
\begin{equation}\label{eq:unbal} \gamma_{\D}(z_2,X_2) < \gamma_{\D}(z_1, X_1) \text{ for every }z\in W_{\reg} \text{ and }X\in T_z(W)\setminus \{0\}.
\end{equation} 
In particular, near every such $z$ the variety $W$ is a graph of a holomorphic function over the first coordinate. 
Moreover, 
\begin{equation}\label{eq:unbal-conclusion}
\rho(z_2, w_2)< \rho(z_1, w_1)
\end{equation}
 if $z,w\in W_{reg},$ $z\neq w$ are close enough to each other.

We shall prove that $W$ contains a graph of a holomorphic function $f:\D\to\D$ with the property $|f(\lambda)|<|\lambda|$, $\lambda\in\D\setminus\{0\}$ which would finish the proof.

First note that near $(0,0)$ we get the existence of a holomorphic $f_0:\triangle(0,\epsilon)\to\D$ with the property $|f_0(\lambda)|<|\lambda|$, $\lambda\in\triangle(0,\epsilon)\setminus\{0\}$ and $\{(\lambda,f_0(\lambda)):\lambda\in\triangle(0,\epsilon)\}=W\cap \triangle(0,\epsilon)^2$. Below we show how we may extend the function to the whole unit disc. Denote $K:=\pi_1((W\setminus W_{reg})\cap\{|z_2|\leq|z_1|\})$ which is a discrete subset of $\D\setminus\triangle(0,\epsilon)$. 

Define $\mathcal F$ as the family of all pairs $(U,g)$ where $\triangle(0,\epsilon)\subset U\subset\D$, $U$ is star-shaped (with respect to $0$), $g:U\to\D$ is holomorphic with $|g(\lambda)|<|\lambda|$, $\lambda\in U\setminus\{0\}$ and $g|_{\triangle}\equiv f_0$. The identity principle shows that for any two pairs $(U_j,g_j)\in\mathcal F$ we have
$g_1=g_2$ on $U_1\cap U_2$. Consequently, the relation $(U_1,g_1)\leq (U_2,g_2)$ if $U_1\subset U_2$ is a partial order on $\mathcal F$.

We define the extension as follows $\mathcal U:=\bigcup_{(U,g)\in\mathcal F}U$ and the function $f:\mathcal U\to \D$ with the formula $f(\lambda):=g(\lambda)$ if $\lambda\in U$ and $(U,g)\in\mathcal F$. Note that the function $f$ is well-defined, holomorphic, coincides with $f_0$ on $\triangle(0,\epsilon)$ and $|f(\lambda)|<|\lambda|$, $\lambda\in \mathcal U\setminus\{0\}$. The element $(\mathcal U,f)$ is a maximal element of $\mathcal F$. It is sufficient to show that $\mathcal U=\D$. Suppose the opposite. Take a point $\mu\in\partial \mathcal U\cap\D$ such that the ray $\{r\mu:r\in[0,1)\}\subset \mathcal U$.  First note that $f(\mu):=\lim_{\mathcal U\owns\lambda\to\mu}f(\lambda)$ exists and $|f(\mu)|\leq|\mu|$, as otherwise $W$ would contain over $\mu$ uncountably many points so the disc $\{\mu\}\times\D$ would be contained in $W$ which contradicts the irreducibility of $W$.  Note also that $|f(\mu)|<|\mu|$. To see it divide $[0,1]$ into sufficiently small intervals $0=t_0<t_1<\cdots <t_N:=1.$ Using \eqref{eq:unbal-conclusion} we get $$\rho(f(\mu), f(0))\leq \sum_{j=1}^N \rho(f(t_{j-1}\mu), f(t_{j}\mu)) < \sum_{j=1}^N\rho(t_{j-1}\mu, t_{j}\mu) = \rho(\mu,0).$$ 

We claim that $\mu\in K$. Suppose the opposite. Then $(\mu,f(\mu))\in W_{reg}$. The set $W_{reg}$ is near the point $(\mu,f(\mu))$ the graph of a holomorphic function which easily gives a strictly bigger element than $(\mathcal U,f)$, which contradicts its maximality.

Now we extend the function $f$ as follows. We already know that $\Int(\bar {\mathcal U})$ is $\D$. And now proceeding similarly as earlier assuming that $f$ cannot be extended continuously to $\D$ we get that at some point $\mu$ from $\partial \mathcal U\cap\D$ the cluster set of $f$ would contain uncountably many points from $\D$ which would force $W$ to contain $\{\mu\}\times\D$ -- a contradiction. The continuous extension of $f$ is then trivially holomorphic.

\end{proof}

\begin{corollary}\label{th:bidisc}
Let $V$ be as in Theorem~\ref{th:infbid}. Then $V$ is a Carath\'eodory set if and only if it is a holomorphic retract.
\end{corollary}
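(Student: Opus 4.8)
The plan is to prove both implications, the forward one being routine and the converse resting on Theorem~\ref{th:infbid}.

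For the implication that a holomorphic retract is a Carath\'eodory set I would argue directly: if $r:\D^2\to\D^2$ is a holomorphic retraction with image $V$ and $F\in H^\infty(V)$, then $F\circ r$ is a bounded holomorphic function on $\D^2$ which restricts to $F$ on $V$ (since $r=\id$ there); hence for $z,w\in V$ we get $\rho(F(z),F(w))=\rho((F\circ r)(z),(F\circ r)(w))\le c_{\D^2}(z,w)$. Taking the supremum over such $F$ gives $c_V\le c_{\D^2}$ on $V$, and since the reverse inequality always holds, $V$ is Carath\'eodory. This is the easy direction already recorded in the text.

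For the converse, suppose $V$ is a Carath\'eodory set. Then $V$ is connected and, being also an infinitesimal Carath\'eodory set, Theorem~\ref{th:infbid} applies: $V$ is a union of a discrete set and complex geodesics. If $V$ is zero-dimensional it is a single point (by connectedness), hence the image of a constant retraction; if $V=\D^2$ it is the image of the identity. In the remaining case $V$ is a one-dimensional connected union of complex geodesics, and connectedness forces the discrete part to lie inside the union of geodesics (an isolated point off every geodesic would be a separate component). I would then reduce the statement to the claim that $V$ is \emph{irreducible}: a single complex geodesic of $\D^2$ is always a retract, because one of its two components is an automorphism of $\D$, so after reparametrisation the geodesic is the graph $\{(\la,g(\la)):\la\in\D\}$ (or $\{(g(\la),\la):\la\in\D\}$) of a holomorphic $g:\D\to\D$, onto which $(z_1,z_2)\mapsto(z_1,g(z_1))$ retracts. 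Thus it remains only to exclude the reducible case.

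To show irreducibility I would argue by contradiction: assume $V$ has two distinct one-dimensional components $W_1,W_2$; since $V$ is connected we may take them to meet at a point $p$, and their intersection $W_1\cap W_2$ is then discrete. The idea is that reducibility supplies holomorphic functions on $V$ behaving \emph{independently} on the two branches (constrained only to match on the discrete set $W_1\cap W_2$), and such functions spread a suitable pair of points further apart than any function on $\D^2$ can. Concretely, after normalising and writing the branches as graphs $W_i=\{z_2=g_i(z_1)\}$ over a common coordinate, I would pick $\la_0$ with $g_1(\la_0)\ne g_2(\la_0)$ and the points $z=(\la_0,g_1(\la_0))\in W_1$, $w=(\la_0,g_2(\la_0))\in W_2$, for which $c_{\D^2}(z,w)=\rho(g_1(\la_0),g_2(\la_0))$ by the explicit polydisc formula $c_{\D^2}(z,w)=\max_j\rho(z_j,w_j)$. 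Using functions of the form $F=z_2+(z_2-g_2(z_1))\,t(z_1)$, which are honestly holomorphic on $\D^2$, lie in the unit ball of $H^\infty(V)$ for appropriate $t$, and leave $F(w)$ fixed while pushing $F(z)$ off $g_1(\la_0)$, I would produce $\rho(F(z),F(w))>c_{\D^2}(z,w)$, contradicting $c_V=c_{\D^2}$. The configurations where the two geodesics are graphs over different coordinates, or are tangent at $p$, are treated in the same spirit, most uniformly by parametrising $W_i$ by geodesics $f_i$ with $f_i(0)=p$ and comparing $c_{\D^2}(z,w)$ with $c_{\D^2}(z,p)+c_{\D^2}(p,w)=\rho(s,0)+\rho(t,0)$ for $z=f_1(s)$, $w=f_2(t)$.

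The main obstacle is precisely this last construction: one must guarantee, for at least one admissible pair $(z,w)$, that the spreading function \emph{strictly} beats the ambient Carath\'eodory distance. This amounts to showing that the relevant Pick/interpolation problem on $W_1\cap W_2$ leaves genuine freedom (a non-degenerate Pick disc) at the chosen point --- equivalently, that $p$ does not lie on a single complex geodesic joining $z$ and $w$ for every choice --- and to controlling this uniformly across the possible intersection patterns (finite versus Blaschke-infinite, transverse versus tangential). Once strictness is secured, the contradiction closes, $V$ must be irreducible, and the previous paragraph identifies it as a retract.
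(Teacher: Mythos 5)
The easy implication, the reduction via Theorem~\ref{th:infbid} and connectedness, and the observation that a single complex geodesic of $\D^2$ is a graph over one coordinate and hence a retract are all fine. The problem is that the entire difficulty of the corollary sits in the one step you leave open, namely ruling out a connected union of two or more geodesics, and the construction you sketch for it would not work as set up. Your proposed witness pair --- two points with the same first coordinate, one on each branch --- provably cannot produce the strict inequality in general. Take $W_1=\{(\lambda,\lambda):\lambda\in\D\}$, $W_2=\{(\lambda,-\lambda):\lambda\in\D\}$, $z=(\lambda_0,\lambda_0)$, $w=(\lambda_0,-\lambda_0)$. Any $F\in H^{\infty}(W_1\cup W_2)$ with $\|F\|\leq 1$ restricts to self-maps $h_1,h_2$ of $\overline{\D}$ on the two branches with $h_1(0)=h_2(0)$, so by Schwarz--Pick and the triangle inequality
$$\rho(F(z),F(w))\leq\rho(h_1(\lambda_0),h_1(0))+\rho(h_2(0),h_2(\lambda_0))\leq 2\rho(0,\lambda_0)=\rho(\lambda_0,-\lambda_0)=c_{\D^2}(z,w),$$
since $\lambda_0,0,-\lambda_0$ lie in order on a hyperbolic geodesic of $\D$. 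Thus $c_V(z,w)=c_{\D^2}(z,w)$ exactly at that pair, and in particular your family $F=z_2+(z_2-g_2(z_1))t(z_1)$ can never beat the ambient distance there. To defeat such a $V$ one must choose a pair for which $\max_j\rho(z_j,w_j)$ is strictly smaller than the sum of the distances to the common point in \emph{both} coordinates simultaneously (e.g. $z=(r,r)$, $w=(ir,-ir)$), and you give no argument that such a pair together with an admissible interpolant exists for every configuration of branches (tangential intersections, infinitely many intersection points, branches that are graphs over different coordinates). So the decisive step is not merely unfinished; the specific construction would fail.

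The paper takes a genuinely different route that bypasses interpolation on the reducible variety altogether. Starting from the geodesic $\mathcal G=\{(\lambda,f(\lambda))\}$ supplied by Theorem~\ref{th:infbid} and an extra point of $V$ moved to the origin (so $f(0)\neq 0$), it observes that neither $|f(\lambda)|<|\lambda|$ nor $|f(\lambda)|>|\lambda|$ can hold throughout $\D\setminus\{0\}$, hence there are uncountably many $\mu$ with $|f(\mu)|=|\mu|$. Each such $\mu$ yields a \emph{balanced} pair $((0,0),(\mu,f(\mu)))$ of points of $V$, and Lemma~\ref{lemma:balanced} --- which rests on Proposition~\ref{prop:left} and on the polynomial convexity of $V$ --- forces the linear geodesic $\{(\lambda,\omega_\mu\lambda)\}$ into $V$; uncountably many distinct such discs force $V=\D^2$. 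Note that this uses the non-infinitesimal Carath\'eodory hypothesis and polynomial convexity in an essential way, neither of which enters your argument for the hard case. The shortest repair of your proof is to replace the interpolation step by this balanced-pair mechanism.
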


\begin{proof} We already know that the Carath\'eodory set is connected.
If $V$ is not a single point, then, by Theorem~\ref{th:infbid}, $V$ contains the graph of a complex geodesic $\mathcal G$ which is, up to a permutation of variables, of the form $\{(\lambda, f(\lambda)):\lambda\in \mathbb D\}$, where $f\in \mathcal O(\DD, \DD)$. If $V$ is not equal to $\mathcal G$, we can assume that $(0,0)\in V$ and $f(0)\neq 0$. Note that $|f(\lambda)|<|\lambda|$ for $\lambda\in \DD\setminus \{0\}$ implies that $f(0)=0$ which contradicts our assumption, while $|f(\lambda)|>|\lambda|$ for $\lambda\in \mathbb D_*$ is impossible. Therefore, there is at least one $\mu$ (and thus uncountable many) such that $|f(\mu)|=|\mu|$. For every such $\mu$ the pair $(0,0)$ and $(\mu, f(\mu))$ is balanced. Therefore these $\mu$ the variety $V$ contains a geodesic $\{(\lambda, \omega_\mu \lambda):\lambda\in \DD\}$, where $\omega_\mu$ satisfies $f(\mu) = \omega_\mu \mu$. It is also elementary that the set of all these $\omega_{\mu}$ is uncountable. From this we simply get that $V$ is the whole bidisc.
\end{proof}

The equivalence of the notions of the Carath\'eodory set and infinitesimal Carath\'eodory set should hold for varieties without singular points for a reasonable class of $V\subset D$.

\bigskip

In the remaining part of the section we shall describe relation between Carath\'edory and extension sets in some classes of domains. Roughly speaking both notions can be replaced by each other in statements of all the results that have been obtained so far. We shall explain this briefly below.

Recall that a domain $D\subset\C^n$ is {\it linearly convex} if its complement is the union of complex affine hyperplanes. A domain $D$ with the smooth defining function $r$ satisfying the inequality
\begin{equation}
\sum_{j,k=1}^n\frac{\partial^2 r}{\partial z_j\partial\bar z_k}(w)X_j\bar X_k>\left|\sum_{j,k=1}^n\frac{\partial^2 r}{\partial z_j\partial z_k}(w)X_jX_k\right|
\end{equation}
for any $w\in\partial D$, $X\neq 0$ lying in the complex tangent hyperplane to $\partial D$ at $w$ is called {\it strongly linearly convex}.

\begin{remark} Using methods from the present paper and the ones used in \cite{Kos-McC 2018} we can get the following result:
Let a domain $D$ in $\mathbb C^n$ be strictly convex or strongly linearly convex. Let $V$ be a relatively polynomially convex analytic subset of $D$. Then $V$ is a Carath\'eodory set if and only if it is totally geodesic.

In particular, if $D$ is the Euclidean ball $\mathbb B_n$ or if $n=2$, then any Carath\'eodory set $V$ is a holomorphic retract.
\end{remark}

\begin{remark}
Except for examples described above the extension property problem was solved fully only in a particular example of the domain 
 \begin{equation}
\mathbb G_2:=\{(\lambda+\mu, \lambda \mu):\ \lambda,\mu\in \DD\}
\end{equation}
called {\it the symmetrized bidisc}. It turns out that both notions coincide there in a reasonable class of domains. More precisely,  an algebraic set $V$ in  $\mathbb G_2$ is a Carath\'eodory set if and only if it has extension property. Moreover, there are one dimensional Carath\'eodory sets in $\mathbb G_2$ that are not complex geodesics.

 To see this one needs to follow the proof in \cite{Agl-Lyk-You 2019} to get that any Carath\'eodory set $V$ is either $V_\beta$ or $V_\beta \cup \Sigma$, $\beta\in \DD$, $V_\beta=\{(\beta + \bar \beta \lambda,\lambda): \lambda \in \mathbb D\}$ and $\Sigma=\{(2\lambda, \lambda^2):\lambda\in \DD\}$ or there is a biholomorphic mapping $\iota:\DD\to V$. In the latter case, $\rho(\iota^{-1}(z), \iota^{-1}(w))\leq c_{V}(z,w) = c_{\mathbb G_2}(z,w)$ for $z,w\in V.$ Thus for $\iota(\lambda)=z$ and $\iota(\mu)=w$ we get $\rho(\lambda, \mu) \leq c_{\mathbb G_2}(\iota(\lambda), \iota(\mu))\leq \rho(\lambda, \mu)$, which means that $\iota$ is a geodesic.
\end{remark}

\begin{remark}
The extension problem has been studied in the tridisc in \cite{Kos-McC 2019}, where partial characterization of extension sets were obtained. One can modify arguments used there along methods exploited within the proof of Theorems~\ref{th:infbid} and \ref{th:bidisc} to get that all the assertions of the main results for extension sets in \cite{Kos-McC 2019} are also satisfied by Carath\'eodory sets.
\end{remark}

\section{Carath\'eodory sets in the tridisc - the case of the sets $M_{\alpha}$}\label{section:sets-M}

Our first and main result of this section is the proof of the fact that the sets $M_{\alpha}$ -- two dimensional algebraic submanifolds of $\D^3$ defined earlier -- are Carath\'eodory sets. Moreover, the equality as in the Lempert theorem holds for them. The main result is formulated below.

\begin{theorem}\label{thm:Viscar}
The set $M_\alpha=\{z\in\mathbb D^3:\alpha_1 z_1+\alpha_2 z_2+ \alpha_3 z_3=\bar \alpha_3 z_1z_2+\bar \alpha_2 z_1z_3+\bar \alpha_1 z_2z_3\}$ is Carath\'eodory. Moreover, $l_{M_{\alpha}}\equiv c_{M_{\alpha}}\equiv (c_{\D^3})_{|M^2_{\alpha}}$.
\end{theorem}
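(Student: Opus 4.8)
The plan is to prove that $M_\alpha$ is totally geodesic in the tridisc and to read off all three equalities from this fact. For any $z,w\in M_\alpha$ one always has the chain
\[
c_{\D^3}(z,w)\le c_{M_\alpha}(z,w)\le l_{M_\alpha}(z,w),
\]
the first inequality because every $F\in\OO(\D^3,\D)$ restricts to an element of $\OO(M_\alpha,\D)$, the second because $c\le l$ on any set. As $\D^3$ is bounded and convex, hence a Lempert domain, $c_{\D^3}=l_{\D^3}$ and $c_{\D^3}(z,w)=\max_j\rho(z_j,w_j)$. Thus it suffices to produce, for each pair $z,w\in M_\alpha$, a complex geodesic of $\D^3$ joining them whose image lies in $M_\alpha$: such a geodesic $f$ yields $l_{M_\alpha}(z,w)\le c_{\D^3}(z,w)$, which collapses the chain to the desired equalities and exhibits $M_\alpha$ as totally geodesic, so that $M_\alpha$ is Carath\'eodory by Proposition~\ref{prop:tot}.

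First I would dispose of the degenerate case: if $\alpha$ fails the triangle inequality then by Remark~\ref{rem:euiv} the set $M_\alpha$ is a graph over $\D^2$, hence a retract biholomorphic to the bidisc, and the statement is immediate. Assume now $\alpha$ satisfies the triangle inequality (so all $\alpha_i\neq0$). Using the automorphism-invariance of the family (Remark~\ref{rem:euiv}(ii)) I pick $m\in\Aut(\D^3)$ with $m(w)=0$; since $m$ carries $M_\alpha$ onto some $M_\beta$ and maps geodesics to geodesics, it is enough to treat $w=0$. Permuting the coordinates (which only permutes $\alpha$) I may assume $|z_3|=\max_j|z_j|$, and after rotating the third variable (which again replaces $\alpha$ by another admissible triple, still denoted $\alpha$) that $z_3=:s\in[0,1)$. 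Here I invoke the elementary description of geodesics of the polydisc: a disc $f=(f_1,f_2,f_3)\colon\D\to\D^3$ is a complex geodesic exactly when at least one component is an automorphism of $\D$, and this component must sit in a coordinate of maximal hyperbolic displacement. This reduces everything to finding a disc in $M_\alpha$ through $0$ and $z$ with $f_3(\lambda)=\lambda$.

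Solving the defining equation of $M_\alpha$ for $z_2$ after setting $z_3=\lambda$ expresses $f_2$ as an explicit M\"obius transform of $f_1$ with $\lambda$-dependent coefficients:
\[
f_2=\frac{(\bar\alpha_2\lambda-\alpha_1)f_1-\alpha_3\lambda}{\alpha_2-\bar\alpha_3 f_1-\bar\alpha_1\lambda}.
\]
Because $z\in M_\alpha$, this formula automatically forces $f_2(s)=z_2$, and since $\alpha_2\neq0$ it gives $f_2(0)=0$; so the only remaining requirements are that $f_1\colon\D\to\D$ be holomorphic with $f_1(0)=0$, $f_1(s)=z_1$, and that the resulting $f_2$ again map $\D$ into $\D$. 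I would meet these by taking $f_1$ to be a finite Blaschke product realizing the two interpolation values (possible since $|z_1|\le s$). Then $(f_1(\lambda),\lambda)\in\T^2$ for a.e.\ $\lambda\in\T$, and the inner character of the relation defining $M_\alpha$ (the computation of Remark~\ref{rem:euiv}(ii), applied by symmetry to the variable $z_2$ in terms of $(z_1,z_3)$) gives $|f_2(\lambda)|=1$ a.e.\ on $\T$; hence, as long as $f_2$ has no poles in $\D$, it is inner and in particular a self-map of $\D$, completing the geodesic.

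The main obstacle is precisely this last point: ensuring $f_2$ has no poles in $\D$, i.e.\ that the denominator $\alpha_2-\bar\alpha_3 f_1-\bar\alpha_1\lambda$ does not vanish inside $\D$ (or that any zero is cancelled by the numerator). I expect to handle it by exploiting the freedom in the Blaschke interpolant $f_1$: a degree-two Blaschke product vanishing at $0$ with $f_1(s)=z_1$ comes in a one-parameter family, and a winding-number (argument principle) count of $f_2|_{\T}$ combined with a suitable choice of the free parameter should rule out interior poles. This is where the genuine work lies; the remaining ingredients are the general properties of invariant functions and the elementary geodesic structure of the polydisc.
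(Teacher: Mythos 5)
Your overall skeleton coincides with the paper's: reduce to the non-retract case via the triangle inequality, use the automorphism invariance of the family $\{M_\alpha\}$ to place one point at the origin, build complex geodesics of $\D^3$ inside $M_\alpha$ whose third component is $\lambda$ and whose other components are Blaschke products vanishing at $0$, and conclude via Proposition~\ref{prop:tot} and the chain $c_{\D^3}\le c_{M_\alpha}\le l_{M_\alpha}$. The problem is that the step you defer --- ensuring that the second component determined by the defining equation of $M_\alpha$ is actually a holomorphic self-map of $\D$ (no poles) --- is not a technical loose end but the entire analytic content of the paper's argument. In Lemma~\ref{lem:Viscar} the authors take the ansatz $\Phi_\gamma(\lambda)=(\lambda m_{\gamma_1}(\omega\lambda),\lambda m_{\gamma_2}(\eta\lambda),\lambda)$, reduce membership in $M$ to a single linear equation in the unimodular unknowns $(\omega,\eta)$, and then must prove the two inequalities
$$|a(1-|\gamma_1|^2)-b(1-|\gamma_2|^2)|<|a\gamma_2+b\gamma_1+\gamma_1\gamma_2|<a(1-|\gamma_1|^2)+b(1-|\gamma_2|^2)$$
(one by a subharmonicity/maximum-principle argument on the lens $\mathcal X$, the other by a direct estimate exploiting the triangle inequality on $\{a,b,1\}$) to guarantee that solutions $(\omega,\eta)\in\T^2$ exist and vary continuously. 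Your proposal contains no substitute for these estimates; "a winding-number count combined with a suitable choice of the free parameter should rule out interior poles" is exactly the claim that needs proof, and it is where the triangle-inequality hypothesis must enter.

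There is a second, structural gap. You aim to hit a prescribed point directly by imposing $f_1(0)=0$, $f_1(s)=z_1$ and then tuning the one remaining parameter of a degree-two Blaschke product. The paper does not do this, and apparently cannot: Lemma~\ref{lem:Viscar} only produces geodesics through $0$ in each prescribed \emph{tangent direction} $(\gamma,1)$, and reaching an arbitrary point of $M_\alpha$ requires a separate topological argument --- the map $\psi_x:\mathcal X\to M_x$ extends to a homeomorphism of the boundaries of two simply connected surfaces and is therefore onto. In your setup the interpolation condition $f_1(s)=z_1$ consumes most of the freedom, and there is no a priori reason the residual one-parameter family contains a pole-free choice of $f_2$ for every pair; the infinitesimal-plus-degree-theory route exists precisely to avoid solving this two-point problem head on. As written, the proposal is a plausible plan but not a proof: both the existence of admissible parameters and the surjectivity onto all of $M_\alpha$ remain open in your argument.
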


 What remains unclear for us is whether the sets $M_{\alpha}$ have the extension property. Moreover, as we shall also see later in the section,  
the sets $M_{\alpha}$ give rise to a construction of two-dimensional domains (denote by $D_{a,b}$) that are Lempert. To the best of our knowledge the fact that the domains $D_{a,b}$ are Lempert cannot be proven by methods developed by Lempert. This makes the sets extremely interesting from that point of view - we shall address these problems at the end of the present Section.

\bigskip

The result is trivial if $M:=M_{\alpha}$ is a retract. Therefore, from now on we shall assume that this is not the case. In other words the inequalities $|\alpha_{j_1}|+|\alpha_{j_2}|>|\alpha_{j_3}|$ are satisfied for all permutations $(j_1,j_2,j_3)$ of the set $\{1,2,3\}$. In the sequel the triples $\alpha$ satisfying this property will be called the ones that {\it satisfy the triangle inequality}. Note that if the triple $\alpha$ satisfies the triangle inequality then $\alpha_j\neq 0$, $j=1,2,3$.

\bigskip


According to Proposition~\ref{prop:tot} to get the assertion it is sufficient to show that $M$ is totally geodesic. To prove it we shall first show in Lemma~\ref{lem:Viscar} that it is infinitesimally totally geodesic. Then a topological argument will finish the proof.


Using Remark~\ref{rem:euiv} we may make the following reduction. Instead of proving the fact that $M_{\alpha}$ is Carath\'eodory for the fixed $\alpha$
it is sufficient to show that $$l_{M_{\alpha}}(0,x)=c_{M_{\alpha}}(0,x) = c_{\D^3}(0,x),\quad x\in M_{\alpha}$$ for any $M_{\alpha}$ being not a retract.

Then the idea of the proof of the above equality is the following. First we show the existence of a complex geodesic passing through the origin and arbitrary vector tangent to $M_{\alpha}$ at $0$. It is theoretically possible that not all the points from the set $M_{\alpha}$ will be achieved by such geodesics.
But the use of some topological argument will provide the existence of a geodesic passing through $0$ and arbitrary point of $M_{\alpha}$. At the same time we show that left inverses to all such geodesics may be attained by one of three functions: the projections of $M$ onto one of the three axes.

\bigskip

We fix now $M=M_{\alpha}$ being not a retract. Note that $M$ can be written as $$M=\left\{z\in \D^3:\ z_3 = \omega \frac{a z_1 + b z_2 - z_1 z_2}{\bar b z_1 + \bar a z_2 - 1}\right\},$$ for some numbers $a,b$ such that the triple $\{a,b,1\}$ satisfies the triangle inequality and unimodular $\omega$. Using linear automorphism of $\D^3$ we can additionally assume that $a,b> 0$ and $\omega =1$. Note that in such a case $T_0M=\{X\in\mathbb C^3:aX_1+bX_2+X_3=0\}$.

We start the proof with the following statement.

\begin{lemma}\label{lem:Viscar}  Define 
$$
\mathcal X:=\{\gamma=(\gamma_1,\gamma_2)\in\mathbb C^2:(\gamma,1)\in T_0M \text{ and } |\gamma_1|,|\gamma_2|<1\}.
$$ 
Then there are two continuous mappings  
\begin{equation}
(\omega,\eta)=(\omega,\eta)(\gamma):\mathcal X\to\T^2
\end{equation}
such that for any $\gamma\in\mathcal X$ the image of the mapping
\begin{equation}
(\dag)\qquad \Phi_{\gamma}:\D\owns \lambda\mapsto \left(\lambda m_{\gamma_1}(\omega \lambda), \lambda m_{\gamma_2}(\eta \lambda),\lambda \right)
\end{equation}
 lies in $M$ ($m_{\nu}(\lambda):=\frac{\nu-\lambda}{1-\bar\nu\lambda}$, $\lambda\in\D$, $\nu\in\D$). 

The two existing mappings $(\omega,\eta)$ are such that both components differ at each argument.

Consequently, for any $\gamma\in\mathcal X$ there are two non-equivalent geodesics (i.e. having different image) passing through the pair $(0;(\gamma,1))$. 

We also have the 'infinitesimal' version of the Lempert theorem at $0$, namely, the equality $\kappa_M(0;X)=\gamma_M(0;X)$ holds for any $X\in T_0M$.


\end{lemma}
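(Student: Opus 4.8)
The plan is to regard the requirement $\Phi_\gamma(\D)\subset M$ as a system of equations for the unimodular parameters $\omega,\eta$ and to solve it by elementary plane geometry. To begin with, observe that $\Phi_\gamma$ maps $\D$ into $\D^3$ for every choice of $\omega,\eta$ (each of the first two coordinates is $\lambda$ times a point of $\D$, and the third is $\lambda$), that $\pi_3\circ\Phi_\gamma=\id_\D$ where $\pi_3$ denotes the projection onto the last coordinate, and that $\Phi_\gamma'(0)=(\gamma,1)$. Hence $\Phi_\gamma$ is automatically a complex geodesic of $\D^3$ with left inverse $\pi_3$, passing through $(0;(\gamma,1))$; the one thing to be arranged is that its image lie on $M$.

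To impose this, I substitute $\Phi_\gamma$ into the equation $z_3(bz_1+az_2-1)=az_1+bz_2-z_1z_2$ defining $M$ (recall $a,b>0$, $\omega=1$), clear denominators and cancel a factor $\lambda$. The result is a polynomial identity in $\lambda$ of degree three. Using the tangency relation $a\gamma_1+b\gamma_2=-1$ --- that is, $(\gamma,1)\in T_0M$ --- and its complex conjugate, the coefficients of $\lambda^0$ and of $\lambda^3$ vanish identically, while the coefficients of $\lambda^1$ and $\lambda^2$ reduce, after simplification, to
\begin{equation*}
A\omega+B\eta=-c\qquad\text{and}\qquad B\omega+A\eta+\bar c\,\omega\eta=0,
\end{equation*}
where $A:=a(1-|\gamma_1|^2)$, $B:=b(1-|\gamma_2|^2)$ (both positive since $|\gamma_1|,|\gamma_2|<1$) and $c:=b\gamma_1+a\gamma_2+\gamma_1\gamma_2$. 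The decisive observation is that, on $\T^2$, the second equation is nothing but the complex conjugate of the first: conjugating $A\omega+B\eta=-c$ and multiplying through by $\omega\eta$ (using $\bar\omega=\omega^{-1}$, $\bar\eta=\eta^{-1}$) yields exactly $B\omega+A\eta+\bar c\,\omega\eta=0$. Thus the whole system collapses to the single equation $A\omega+B\eta=-c$ with $\omega,\eta\in\T$.

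Geometrically this asks whether $-c$ is the sum of a vector of modulus $A$ and one of modulus $B$; such a decomposition exists if and only if $|A-B|\le|c|\le A+B$, and there are exactly two of them precisely when both inequalities are strict. Proving the strict triangle inequalities $|A-B|<|c|<A+B$ for all $\gamma\in\mathcal X$ is the crux, and the step I expect to cost the most work. I would handle it by a direct computation, organized around the boundary behaviour: when $|\gamma_1|=1$ one has $A=0$ and a short calculation gives $|c|=B$, and symmetrically $|\gamma_2|=1$ forces $B=0$ and $|c|=A$, so that both differences $(A+B)^2-|c|^2$ and $|c|^2-(A-B)^2$ vanish on $\partial\mathcal X$; they are therefore governed by the factors $1-|\gamma_1|^2$ and $1-|\gamma_2|^2$, which are strictly positive on $\mathcal X$, and one checks that the two differences are strictly positive there. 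Granting the strict inequalities, the two solutions depend real-analytically on $(A,B,c)$, hence on $\gamma$; since $\mathcal X$ is convex (in the parametrization $\gamma_2=(-1-a\gamma_1)/b$ it is an intersection of two discs) and so simply connected, the two solution branches are globally single-valued and give two continuous maps $(\omega,\eta)\colon\mathcal X\to\T^2$. Distinct solutions satisfy $\omega\ne\omega'$ --- otherwise $B\eta=B\eta'$ would force $\eta=\eta'$ --- and hence also $\eta\ne\eta'$, so the two maps differ in both coordinates; the associated geodesics have different images, because $\pi_3\circ\Phi_\gamma=\id_\D$ forces any element of $\Aut(\D)$ intertwining them to be the identity.

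Finally, the infinitesimal Lempert equality follows from the geodesics just produced. For $X=(\gamma,1)$ with $\gamma\in\mathcal X$ we have $\gamma_{\D^3}(0;X)=\max(|\gamma_1|,|\gamma_2|,1)=1$, while $\Phi_\gamma\subset M$ yields $\kappa_M(0;X)\le1$; combined with the always-valid chain $\gamma_{\D^3}(0;X)\le\gamma_M(0;X)\le\kappa_M(0;X)$ this gives $\kappa_M(0;X)=\gamma_M(0;X)$. For an arbitrary $0\ne X\in T_0M$ I would exploit the symmetry of the defining equation of $M$ under permutations of the coordinates: choosing an index $j$ with $|X_j|$ maximal and normalizing $|X_j|=1$, the remaining two coordinates lie in $\overline{\D}$, and solving the defining equation for $z_j$ the same computation produces a complex geodesic of $\D^3$ lying in $M$ through $(0;X)$ --- admitting the degenerate case $|\gamma_i|=1$, where the corresponding $m_{\gamma_i}$ collapses to a unimodular constant. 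This yields $\kappa_M(0;X)\le\gamma_{\D^3}(0;X)$, and the sandwich $\gamma_{\D^3}\le\gamma_M\le\kappa_M\le\gamma_{\D^3}$ forces equality for every $X\in T_0M$.
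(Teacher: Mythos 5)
Your reduction is exactly the paper's: substituting $\Phi_\gamma$ into the defining equation and using $a\gamma_1+b\gamma_2+1=0$ kills the coefficients of $\lambda^0$ and $\lambda^3$ and collapses the two remaining coefficient equations (the second being the torus-conjugate of the first) to the single condition $a(1-|\gamma_1|^2)\omega+b(1-|\gamma_2|^2)\eta=-(a\gamma_2+b\gamma_1+\gamma_1\gamma_2)$, i.e.\ $A\omega+B\eta=-c$ with $\omega,\eta\in\T$, and the existence of exactly two continuously varying solutions is then the two-circle intersection criterion $|A-B|<|c|<A+B$. Your treatment of branch continuity over the simply connected lens $\mathcal X$, of the distinctness of the two solutions in both components, of the non-equivalence of the resulting geodesics via the left inverse $\pi_3$, and of the passage from $X=(\gamma,1)$, $\gamma\in\mathcal X$, to arbitrary $X\in T_0M$ by permutation symmetry (including the degenerate boundary case) is correct and in places more explicit than the paper.

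The gap is the double inequality $|A-B|<|c|<A+B$ on $\mathcal X$, which you correctly identify as the crux but do not prove: the heuristic that both differences vanish on $\partial\mathcal X$ and are ``governed by'' the positive factors $1-|\gamma_j|^2$ gives no sign information in the interior. Worse, for the lower bound the interior sign genuinely depends on the standing hypothesis that $\{a,b,1\}$ satisfies the triangle inequality, which your argument never invokes: for $a>1+b$ and $\gamma=(-1/a,0)\in\mathcal X$ one computes $|c|=b/a$ and $|A-B|=(a^2-1-ab)/a>b/a$, so $|c|<|A-B|$ and no unimodular solutions exist, even though both differences still vanish on all of $\partial\mathcal X$. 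The paper proves the upper bound not by computation but by a maximum principle: $h(\gamma)=|c|-A-B$ is subharmonic on the complex line containing $\mathcal X$ and vanishes on $\partial\mathcal X$, hence is negative inside; the lower bound is then an explicit chain of estimates in the single variable $\gamma_2$ (after eliminating $\gamma_1=-(b\gamma_2+1)/a$) that reduces to $(1-|\gamma_2|)(a+1-b|\gamma_2|)>0$ and in which one divides by $1+b-a>0$ --- precisely where the triangle inequality enters. Without these two arguments (or equivalents) the lemma is not established.
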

\begin{remark}
The set $\mathcal X$ is geometrically lens (linear transformation of intersection of two discs) with exactly two points $\gamma,\tilde\gamma$ from the closure that lie in $\T^2$.
\end{remark}
\begin{proof}[Proof of Lemma~\ref{lem:Viscar}]

Let us recall that the triple $\{a,b,1\}$ satisfies the triangle inequality - it will be used in the sequel extensively.

Our aim is to show for any $\gamma\in\mathcal X$ the existence of exactly two pairs $(\omega,\eta)(\gamma)\in\T^2$ (moreover, varying continuously) such that the following equality
\begin{multline}
a\lambda\frac{\gamma_1-\omega\lambda}{1-\omega\bar\gamma_1\lambda}+b\lambda\frac{\gamma_2-\eta\lambda}{1-\eta\bar\gamma_2\lambda}+\lambda=\\
a\lambda^2\frac{\gamma_2-\eta\lambda}{1-\eta\bar\gamma_2\lambda}+b\lambda^2\frac{\gamma_1-\omega\lambda}{1-\omega\bar\gamma_1\lambda}+\lambda^2\frac{\gamma_1-\omega\lambda}{1-\omega\bar\gamma_1\lambda}\frac{\gamma_2-\eta\lambda}{1-\eta\bar\gamma_2\lambda}
\end{multline}
holds for all $\lambda\in\D$.

Keeping in mind that $\gamma\in\mathcal X$ we easily get that the above equality holds if and only if
\begin{equation}
a\omega+a\eta\bar\gamma_2\gamma_1+b\eta+b\omega\bar\gamma_1\gamma_2+\omega\bar\gamma_1+\eta\bar\gamma_2=-a\gamma_2-b\gamma_1-\gamma_1\gamma_2.
\end{equation}
Keeping in mind that $\gamma\in\mathcal X$ (more precisely making use of the equality $a\gamma_1+b\gamma_2+1=0$) we get that the last is equivalent to
\begin{equation}
a\omega(1-|\gamma_1|^2)+b\eta(1-|\gamma_2|^2)+a\gamma_2+b\gamma_1+\gamma_1\gamma_2=0.
\end{equation}
The elementary planar geometric properties show that to finish the proof it is sufficient to show that for all $\gamma\in\mathcal X$ the following inequalities hold
\begin{equation}
|a(1-|\gamma_1|^2)-b(1-|\gamma_2|^2)|<|a\gamma_2+b\gamma_1+\gamma_1\gamma_2|<a(1-|\gamma_1|^2)+b(1-|\gamma_2|^2).
\end{equation}

To prove the right inequality we consider the function
\begin{equation}
h(\gamma):=|a\gamma_2+b\gamma_1+\gamma_1\gamma_2|-a(1-|\gamma_1|^2)-b(1-|\gamma_2|^2)
\end{equation}
that is defined on a complex line $l$ containing $\mathcal X$ (that is identified as a subdomain of $l$). The function $h$ is subharmonic as a function of a complex variable. To prove the desired inequality it is sufficient to show that $h$ is $0$ on the boundary of $\mathcal X$. Take $\gamma$ from the boundary of $\mathcal X$. We may assume that $|\gamma_2|=1$. Then the elementary calculations give
\begin{equation}
h(\gamma)=|a+b\gamma_1\bar\gamma_2+\gamma_1|-a(1-|\gamma_1|^2)=0.
\end{equation}

The proof of the second inequality goes as follows. Due to the symmetry it is sufficient to show that 
\begin{equation}
|a\gamma_2+b\gamma_1+\gamma_1\gamma_2|+b(1-|\gamma_2|^2)>a(1-|\gamma_1|^2).
\end{equation}
Note that the left side of the above inequality is
\begin{multline}
\left|a\gamma_2-\frac{b}{a}(b\gamma_2+1)-\frac{b\gamma_2+1}{a}\gamma_2\right|+b(1-|\gamma_2|^2)=\\
\left|\frac{b}{a}\gamma_2^2+\frac{b}{a}+\frac{1+b^2-a^2}{a}\gamma_2\right|+b(1-|\gamma_2|^2)\geq\\
\frac{b}{a}|\gamma_2-1|^2-\frac{b^2+2b+1-a^2}{a}|\gamma_2|+b(1-|\gamma_2|^2).
\end{multline}
Therefore, it is sufficient to show that for all $\gamma_2\in\D$ we have
\begin{equation}
\frac{b|1-\gamma_2|^2}{a}-\frac{(b+1)^2-a^2}{a}|\gamma_2|+b(1-|\gamma_2|^2)>\frac{a^2-|b\gamma_2+1|^2}{a}.
\end{equation}
Simplifying above and then dividing by $1+b-a$ the last is equivalent to
\begin{equation}
b|\gamma_2|^2+(a+1)-(b+1+a)|\gamma_2|>0,
\end{equation}
which is equivalent to the inequality
\begin{equation}
(1-|\gamma_2|)(a+1-b|\gamma_2|)>0.
\end{equation}
And the last inequality holds trivially.
\qed
\end{proof}

\begin{remark}\label{remark:blaschke} A small modification of the proof of Lemma~\ref{lem:Viscar} gives a much wider variety of complex geodesics having the components being the Blaschke product of degree one or two. The idea is the following.
For $|\omega|=1$, $\gamma\in\D$ we put
\begin{equation}\label{eq:phi}
\varphi_{\omega, \gamma}(\lambda)=\left(\lambda\frac{\gamma- \omega\lambda}{1-\bar\gamma \omega\lambda}=: \lambda\psi (\lambda), \frac{a\lambda+ b\lambda\psi(\lambda )- \lambda^2\psi(\lambda)}{b\lambda+ a \lambda\psi(\lambda)-1},\lambda\right).
\end{equation} 
Our aim will be to find $\omega$ so that the above mapping lies entirely in $M$ and the second component is the Blaschke product of degree two.

Recall that the Schur algorithm gives that the square polynomial $A\lambda^2+B\lambda+C$ has both roots outside $\overline{\mathbb D}$ if and only if 
$|C|>|A|$ and $|C|^2-|A|^2>|B\bar C-A\bar B|$. Applying this property to our function (coming  from the denominator of the second component) we get the following inequality for $\omega$ that has to be satisfied
\begin{equation}\label{eq:im}
b^2-|a\gamma+1|^2 > |-ab (1-|\gamma|^2) + \omega (a \bar \gamma^2 + (a^2 - b^2 +1) \bar \gamma + a)|
\end{equation}
The above inequality is equivalent to the following one
\begin{equation}
b(1-|\gamma_2|^2)>|a(1-|\gamma_1|^2)-\overline{\omega}(a\gamma_2+b\gamma_1+\gamma_1\gamma_2)|,
\end{equation}
where $\gamma_1:=\gamma$, $\gamma_2:=-\frac{a\gamma_1+1}{b}$. And now we make use of the calculations conducted in the proof of the previous lemma -- to see that the above inequality holds for $|\omega|=1$ lying in a non-empty open arc.

The above reasoning gives the following property for the variety $M$. If only $X\in T_0M$ is such that $|X_1|>|X_j|$, $j=2,3$ we get following
functions 
\begin{equation}
\D\owns\lambda\mapsto(\lambda,\lambda m_{\eta}(\omega\lambda),f_3(\lambda))\in M,
\end{equation}
where $\eta$ is suitably chosen and $|\omega|=1$ may be chosen from some non-empty arc.
\end{remark}

The above result gives an example of a two-dimensional domain for which the infinitesimal version of the Lempert theorem holds. Namely, let
\begin{equation}
D_{a,b}:=\{z\in\D^2:|F_{a,b}(z_1,z_2)|<1\},
\end{equation}
where $F_{a,b}(z_1,z_2):=\frac{az_1+bz_2-z_1z_2}{az_2+bz_1-1}$.

\begin{corollary}\label{corollary:indicatrix} The following equality holds
\begin{equation}
\kappa_{D_{a,b}}\equiv\gamma_{D_{a,b}}.
\end{equation}
In the special case of the origin we have the following formula
\begin{equation}
\kappa_{D_{a,b}}(0;X)=\max\{|X_1|,|X_2|,|aX_1+bX_2|\},\; X\in\mathbb C^2,\; z\in M.
\end{equation}
\end{corollary}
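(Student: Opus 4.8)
The plan is to transport the infinitesimal Lempert equality already established for $M=M_{a,b}$ (with $\omega=1$) in Lemma~\ref{lem:Viscar} down to the domain $D_{a,b}$ via the projection onto the first two coordinates. The key structural fact is that $M$ is a graph over $D_{a,b}$: since $M=\{z\in\D^3: z_3=F_{a,b}(z_1,z_2)\}$ and membership $z\in\D^3$ forces $|F_{a,b}(z_1,z_2)|<1$, the projection $\pi(z_1,z_2,z_3)=(z_1,z_2)$ restricts to a biholomorphism $\pi:M\to D_{a,b}$ with inverse $(z_1,z_2)\mapsto(z_1,z_2,F_{a,b}(z_1,z_2))$. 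Both $\pi$ and $\pi^{-1}$ are holomorphic maps between the two sets, so a standard contraction/decontraction argument for the invariant functions applies. Concretely, $\kappa$ is contracted by holomorphic maps and $\gamma$ (being defined via functions into $\D$) is likewise controlled, so I would use $\pi$ and $\pi^{-1}$ to show $\kappa_{D_{a,b}}(\pi(0);\pi'(0)X)=\kappa_M(0;X)$ and the analogous identity for $\gamma$, for every $X\in T_0M$.

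First I would argue the general identity $\kappa_{D_{a,b}}\equiv\gamma_{D_{a,b}}$. By the two-sided contraction under the biholomorphism $\pi$ one obtains, at a point $p\in D_{a,b}$ with tangent vector $Y\in\CC^2$,
\begin{equation}
\kappa_{D_{a,b}}(p;Y)=\kappa_M(\pi^{-1}(p);(\pi^{-1})'(p)Y),\qquad \gamma_{D_{a,b}}(p;Y)=\gamma_M(\pi^{-1}(p);(\pi^{-1})'(p)Y).
\end{equation}
Since Lemma~\ref{lem:Viscar} gives $\kappa_M(0;X)=\gamma_M(0;X)$ at the origin, and since by homogeneity of the family $\{M_\alpha\}$ under $\Aut(\D^3)$ (Remark~\ref{rem:euiv}(ii)) together with the corresponding automorphisms of $D_{a,b}$ the role of the origin is not special, the infinitesimal equality propagates to every point of $D_{a,b}$. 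More carefully, to reach an arbitrary $p\in D_{a,b}$ I would lift $p$ to $\pi^{-1}(p)\in M$, apply an automorphism $m\in\Aut(\D^3)$ sending $\pi^{-1}(p)$ to $0$ (so that $m(M)=M_\beta$ with $\beta$ again satisfying the triangle inequality), invoke the origin-based equality $\kappa_{M_\beta}(0;\,\cdot\,)=\gamma_{M_\beta}(0;\,\cdot\,)$ from Lemma~\ref{lem:Viscar}, and transport back; the invariance of both $\kappa$ and $\gamma$ under biholomorphisms closes the loop.

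For the explicit formula at the origin I would compute the right-hand side directly. The three coordinate projections $z\mapsto z_1$, $z\mapsto z_2$, and $z\mapsto z_3=F_{a,b}(z_1,z_2)$ are bounded holomorphic functions $D_{a,b}\to\D$ vanishing at $0$ (after noting $F_{a,b}(0,0)=0$), and their derivatives at $0$ applied to $X=(X_1,X_2)$ give $X_1$, $X_2$, and $aX_1+bX_2$ respectively (reading off the linear part of $F_{a,b}$). Hence $\gamma_{D_{a,b}}(0;X)\ge\max\{|X_1|,|X_2|,|aX_1+bX_2|\}$. The reverse inequality, namely $\kappa_{D_{a,b}}(0;X)\le\max\{|X_1|,|X_2|,|aX_1+bX_2|\}$, is exactly what the explicit geodesics $\Phi_\gamma$ of the form $(\dag)$ furnish: each such geodesic realizes a tangent direction through $0$ with the three components being unimodular multiples of $\lambda$ times inner factors, so after normalizing one reads off that the Kobayashi disc attains the claimed maximum. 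Combining the two inequalities with the already-proven equality $\kappa_{D_{a,b}}=\gamma_{D_{a,b}}$ pins down the common value as $\max\{|X_1|,|X_2|,|aX_1+bX_2|\}$.

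The main obstacle I anticipate is verifying that the explicit geodesics of Lemma~\ref{lem:Viscar} and Remark~\ref{remark:blaschke} genuinely cover \emph{all} tangent directions $X\in\CC^2$ at the origin with the correct extremal value, rather than just the directions parametrized by $\gamma\in\mathcal X$. The set $\mathcal X$ only captures directions normalized so that the third component is $\lambda$ (i.e. $X_3\ne0$), and one must separately treat the degenerate directions where $|X_1|$ or $|X_2|$ dominates; here Remark~\ref{remark:blaschke} supplies geodesics with a unit first (or second) component, handling precisely the boundary regime $|X_1|>|X_j|$. Stitching these families together to certify the upper bound $\kappa_{D_{a,b}}(0;X)\le\max\{|X_1|,|X_2|,|aX_1+bX_2|\}$ uniformly in $X$ — and checking the three cases of which term achieves the maximum — is the genuinely computational heart of the argument, though each individual case reduces to an elementary Schwarz-lemma estimate.
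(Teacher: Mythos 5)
Your proposal is correct and follows essentially the route the paper intends: the paper gives no explicit proof of this corollary, presenting it as a direct consequence of Lemma~\ref{lem:Viscar} and Remark~\ref{remark:blaschke} via the identification of $D_{a,b}$ with the graph $M$ under the projection onto the first two coordinates, with the propagation to arbitrary base points handled by the $\Aut(\D^3)$-stability of the family $\{M_\alpha\}$ from Remark~\ref{rem:euiv}(ii). Your closing observation --- that the geodesics from $\mathcal X$ only cover directions where the third component dominates, so that Remark~\ref{remark:blaschke} (plus continuity of $\kappa$ on the taut domain $D_{a,b}$ for the tie cases) is genuinely needed to certify the upper bound for all $X$ --- correctly identifies the only point requiring care.
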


\bigskip

Having proven Lemma~\ref{lem:Viscar} we shall show that $M$ is the Carath\'eodory set. As we already announced the proof will be a consequence of Lemma~\ref{lem:Viscar} and a topological argument.

\begin{proof}[Proof of Theorem~\ref{thm:Viscar}]
We show that
\begin{equation}
c_{M}(0,z)=l_{M}(0,z)=\max\{\rho(0,z_j):j=1,2,3\}.
\end{equation}
Below we use the notation as in Lemma~\ref{lem:Viscar}.
To show the above it is sufficient to show that for any $x\in\D_*$ and any $(\lambda_1,\lambda_2)\in M_x$, where
\begin{equation}
M_x:=\{(\lambda_1,\lambda_2)\in\mathbb D^2: (\lambda_1x,\lambda_2x,x)\in M\}
\end{equation}
there is a $\gamma\in\mathbb \mathcal X$ such that $\Phi_{\gamma}(x)=(\lambda_1x,\lambda_2x,x)$. Denote $\Psi_{\gamma}(x):=\frac{(\Phi_{\gamma}(x))_{1,2}}{x}$. To finish the proof it is sufficient to show that the function
\begin{equation}
\psi_x:\mathcal X\owns \gamma\mapsto\Psi_{\gamma}(x)\in M_x
\end{equation}
is onto. The properties of the functions $(\omega,\eta)$ imply that $\psi_x$ is not only continuous but it also extends continuously onto the closure of $\mathcal X$. Moreover, if $|\gamma_j|=1$ is such that $\gamma\in\overline{\mathcal X}$ then the extension satisfies the equality $(\psi_x(\gamma))_j=\gamma_j$. Both $\mathcal X$ and $M_x$ are simply connected two dimensional surfaces bounded by union of two arcs and such that $\psi_x$ is a homeomorphism between the boundaries. Consequently, $\psi_x$ is onto.

\end{proof}

As a direct consequence of the theorem we get the equality of invariant functions in a class of two-dimensional domains.

\begin{theorem}\label{theorem:lempert-theorem-example} The domain $D_{a,b}$ is a Lempert domain.
\end{theorem}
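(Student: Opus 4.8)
The plan is to deduce that $D_{a,b}$ is a Lempert domain directly from Theorem~\ref{thm:Viscar} by exploiting the fact that $D_{a,b}$ is, up to the obvious holomorphic map, the projection of the surface $M=M_\alpha$ onto its first two coordinates. Recall that $M$ is the graph $z_3=\psi(z_1,z_2)$ over the domain where $|\psi|<1$, and that domain is exactly $D_{a,b}$ (with $\omega=1$, $a,b>0$ after normalization as in the proof of Lemma~\ref{lem:Viscar}). Thus the projection
\begin{equation}
\pi:M\ni(z_1,z_2,z_3)\mapsto(z_1,z_2)\in D_{a,b}
\end{equation}
is a biholomorphism, with inverse $\iota:(z_1,z_2)\mapsto(z_1,z_2,\psi(z_1,z_2))$. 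First I would record that $D_{a,b}$ is taut: it is a bounded domain, and since $M$ is a closed submanifold of the taut domain $\D^3$ the biholomorphic image $D_{a,b}$ inherits tautness (equivalently one checks directly that $|\psi|<1$ forces no boundary sequence of discs to degenerate). Tautness is what lets us upgrade the equality $c=l$ into the existence of complex geodesics through every pair of points.

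The core step is then the transfer of invariant functions under the biholomorphism $\pi$. Holomorphic invariance gives $c_{D_{a,b}}(p,q)=c_M(\iota(p),\iota(q))$ and $l_{D_{a,b}}(p,q)=l_M(\iota(p),\iota(q))$ for all $p,q\in D_{a,b}$, since both the Carath\'eodory pseudodistance and the Lempert function are invariant under biholomorphisms. By Theorem~\ref{thm:Viscar} we have $l_M\equiv c_M$ on $M$, whence
\begin{equation}
l_{D_{a,b}}\equiv c_{D_{a,b}}\quad\text{on }D_{a,b}\times D_{a,b}.
\end{equation}
Together with tautness this is precisely the definition of a Lempert domain, so $D_{a,b}$ is a Lempert domain. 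I would also note that through any pair $p,q\in D_{a,b}$ there passes a complex geodesic: take a complex geodesic $f$ in $M$ through $\iota(p),\iota(q)$ furnished by Theorem~\ref{thm:Viscar} (its left inverse being one of the three coordinate projections of $M$), and push it forward to $\pi\circ f$; the composition of the coordinate-projection left inverse with $\iota$ is a holomorphic function $D_{a,b}\to\D$ serving as a left inverse of $\pi\circ f$.

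There is essentially one point requiring genuine care, which I regard as the main obstacle: one must confirm that Theorem~\ref{thm:Viscar} has been established for the \emph{full} pair $c_M\equiv l_M$ on all of $M\times M$, not merely the base-point version $l_M(0,\cdot)=c_M(0,\cdot)$ proved at the origin. This is supplied by Remark~\ref{rem:euiv}(ii): the family $\{M_\alpha\}$ is stable under automorphisms of $\D^3$ and such automorphisms can send any given point of $M$ to $0$, so the base-point equality at $0$ for every member of the family yields the equality at an arbitrary point by precomposing with an appropriate automorphism. Once this homogenization is invoked, the argument is a routine transfer under biholomorphism and the statement follows. A secondary, purely bookkeeping obstacle is the normalization $a,b>0$, $\omega=1$: for general $\omega$ and complex $a,b$ the linear automorphism of $\D^3$ used in the proof of Lemma~\ref{lem:Viscar} conjugates $D_{a,b}$ to a domain of the same type, so the conclusion is unaffected.
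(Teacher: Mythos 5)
Your argument is correct and is precisely the (unwritten) argument the paper intends: the paper states Theorem~\ref{theorem:lempert-theorem-example} as a direct consequence of Theorem~\ref{thm:Viscar}, relying exactly on the graph biholomorphism $\pi:M\to D_{a,b}$, the biholomorphic invariance of $c$ and $l$, tautness of $D_{a,b}$, and the homogenization via Remark~\ref{rem:euiv}(ii) that the paper itself invokes at the start of the proof of Theorem~\ref{thm:Viscar}. No discrepancies to report.
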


\begin{remark}\label{remark:not-linearly-convex}
Note that if the triple $\{1,a,b\}$ satisfies the triangle inequality then the domain $D_{a,b}$ is never linearly convex. Actually, to see this take an arbitrary point $w\in\partial D_{a,b}\cap\D^2$. Then $|F_{a,b}(w)|=1$. Assuming the linear convexity we find a vector $v\in\C^2\setminus\{0\}$ such that
$|F_{a,b}(w+\lambda v)|\geq 1$ for $\lambda$ close to zero. The minimum principle for holomorphic functions implies that $F_{a,b}(w+\lambda v)$ equals some unimodular constant for $\lambda$ close to $0$, which in view of the explicit formula for 
$F_{a,b}$ easily implies that either $v_1$ or $v_2$ is zero. Assume that $v_1=0$. Then the function $\lambda\to F_{a,b}(w+\lambda v)$ is a homography, the elementary calculations give that it is constant precisely when 
\begin{equation}
bw_1^2-(b^2+1-a^2)w_1+b=0.
\end{equation}
But the above equation is satisfied only for $|w_1|=1$ (use the triangle inequality for the triple $\{a,b,1\}$ to see that the roots of the above square equation are not reals), which gives the contradiction. 
\end{remark}

\begin{remark} As we saw the Lempert Theorem holds for the bounded hyperconvex domain $D_{a,b}$ which is not linearly convex. It would be desirable to decide whether the domain $D_{a,b}$ is not biholomorphic to a convex domain. Note that the domains $D_{a,b}$
are the next candidates for examples of that kind. Perhaps from the point of view of the Lempert Theory the domains $D_{a,b}$ have much less nice properties than the existing examples of that type (the symmetrized bidisc and tetrablock). In any case it seems reasonable that an effort should be undertaken to understand better the function geometric properties of that class of domains. 
\end{remark}

\begin{remark} Let us underline once more that a study of a class of two-dimensional submanifolds of the tridisc that appeared in the study of the extension property in \cite{Kos-McC 2019} led not only to introducing a new property that seems to be better suited in the study of the extension property (Carath\'eodory sets) but also provides examples of domains interesting for the geometric function theory.
\end{remark}

\section{Universal sets for the (infinitesimal) Carath\'eodory extremal problem}\label{section:universal-set} In \cite{Agl-Lyk-You 2018} the authors introduced the notion of universal sets for the Cara\-th\'e\-o\-do\-ry extremal problem concentrating mainly on the problem in the symmetrized bidisc. In the present section we concentrate on that topic presenting results on the domains admitting finite universal sets.

Let $D$ be a domain in $\mathbb C^n$. We say that $\mathcal C\subset\mathcal O(D.\mathbb D)$ is {\it a universal set for the Carath\'eodory} (respectively, {\it for the infinitesimal Carath\'eodory}) {\it extremal problem} if for any $w,z\in D$, $w\neq z$ (respectively, $w\in D$, $X\in\mathbb C^n$, $X\neq 0$) there is an $F\in\mathcal C$ such that $c_D(w,z)=\rho(F(w),F(z))$ (respectively, $\gamma_D(w;X)=\gamma_{\mathbb D}(F(w);F^{\prime}(w)(X))$).

\begin{remark} Assume that $D$ is $c$-hyperbolic and $\gamma$-hyperbolic (i.e. $c_D(w,z)>0$ for all $w,z\in D$, $w\neq z$ and $\gamma_D(w;X)>0$, $w\in D$, $X\in\C^n\setminus\{0\}$). If $\mathcal C$ is a universal set for the Carath\'eodory extremal problem then $\mathcal C$ is also a universal set for the infinitesimal Carath\'eodory extremal problem.
\end{remark}

As to the problem of the existence of finite universal sets we present a result on one dimensional domains and show that the domains $D_{a,b}$ introduced in Section~\ref{section:sets-M} are examples showing that the situation in dimension one and two differs completely. We also simplify and extend results on characterization of domains with finite universal sets presented in \cite{Agl-Lyk-You 2018}.

Recall that the domain $D$ is {\it $c$-finitely compact } if the Carath\'eodory balls $\{z\in D:c_D(w,z)<r\}$ are relatively compact in $D$ for all $w\in D$, $r>0$.

\subsection{Planar domains with minimal universal sets}
Recall that for planar domains $c$-hyperbolicity is equivalent to $\gamma$-hyperbolicity and this is equivalent to the existence of a non-constant bounded holomorphic function (see e. g. \cite{Jar-Pfl 2013}).

The infinitesimal version of the proposition below is the content of Theorem 1 in \cite{Fis 1969}. The non-infinitesimal case can be obtained along the same lines. 

\begin{proposition}\label{prop:extr-planar}{\rm (cf. Theorem 1 in \cite{Fis 1969}).}
Let $D$ be a planar domain. Then Carath\'eodory extremals and infinitesimal Carath\'eodory extremals are uniquely determined which means that up to automorphisms of the unit disc
for any $w\neq z$ (respectively, $w\in D$) there is only one $F\in\mathcal O(D,\mathbb D)$ such that
\begin{align}
c_D(w,z)&=\rho(F(w),F(z)),\\
\text{ respectively, }\gamma_{D}(w;1)&=\gamma_{\mathbb D}(F(w);F^{\prime}(w)).
\end{align}
\end{proposition}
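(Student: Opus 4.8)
The plan is to establish the uniqueness of Carath\'eodory extremals for an arbitrary planar domain $D$ that admits a non-constant bounded holomorphic function (the $c$-hyperbolic case; otherwise there is nothing to prove). I would first reduce to a normalized situation using automorphisms of the disc: given $w\neq z$, any extremal $F$ can be post-composed with an automorphism of $\D$ so that $F(w)=0$ and $F(z)=t$ for some $t\in(0,1)$ with $\rho(0,t)=c_D(w,z)$; uniqueness up to $\Aut(\D)$ then amounts to showing that this normalized $F$ is unique.

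The key tool I would invoke is the theory of the Ahlfors function / extremal problem on planar domains, which reduces to a variational argument. First I would note that the family $\mathcal O(D,\D)$ is normal, so an extremal $F$ exists. Then I would argue by a standard perturbation/variational principle: if $F$ and $G$ are two distinct normalized extremals, consider a convex-type combination or, more precisely, examine the function $\lambda\mapsto\rho(F(w),F(z))$ along a family interpolating between the competitors. The crux is that the hyperbolic metric $\rho$ on $\D$ is \emph{strictly} convex in the relevant sense, so that an extremal for the planar Carath\'eodory problem must satisfy a rigid boundary condition — its boundary values must cover the unit circle in a specified way (this is exactly the content of Proposition~\ref{prop:left} applied in the planar case, forcing $F(D)$ to be dense in $\D$). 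The strict convexity of $\rho$ then promotes this to genuine uniqueness rather than mere extremality.

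Concretely, the main steps in order are: (i) reduce to the normalized problem via $\Aut(\D)$; (ii) record existence of an extremal by normality; (iii) suppose $F_1,F_2$ are two normalized extremals and form the auxiliary map into $\D$ witnessing that their hyperbolic distances agree; (iv) apply the strict form of the Schwarz--Pick inequality, using that equality in the subordination of planar maps forces $F_1$ and $F_2$ to differ by an automorphism of $\D$, which by the normalization must be the identity. The infinitesimal case is entirely parallel, replacing $\rho(F(w),F(z))$ by $|F'(w)|$ and the Schwarz--Pick inequality by its infinitesimal form; this is precisely Fisher's Theorem~1 cited in the statement, and the non-infinitesimal statement follows by the same argument, as the excerpt already asserts.

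The hard part will be step (iii)--(iv): making the rigidity precise. The delicate point is that, unlike in the simply connected case where the Riemann map gives uniqueness outright, a general planar domain can have complicated topology, so one cannot appeal to a biholomorphism onto $\D$. Instead the uniqueness has to come from an intrinsic convexity/variational argument on the extremal problem itself — essentially showing that the set of extremals, viewed in $\mathcal O(D,\D)$, is a single $\Aut(\D)$-orbit because any two would-be extremals would contradict the strict maximality of the Carath\'eodory distance via a strictly-improving perturbation. Since the excerpt explicitly defers to \cite{Fis 1969} for the infinitesimal version and asserts that ``the non-infinitesimal case can be obtained along the same lines,'' I would carry out the $\Aut(\D)$-normalization and then transport Fisher's variational argument verbatim to the distance setting, which is where the only genuine (though routine) work lies.
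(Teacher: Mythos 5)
Your bottom line coincides with the paper's: the authors give no argument beyond citing Fisher's Theorem~1 for the infinitesimal case and asserting that the two-point case ``can be obtained along the same lines'', and your closing sentence does exactly the same, so as a citation your proof is as complete as the paper's. However, the self-contained mechanism you sketch in steps (iii)--(iv) would not work as stated, and it is not what Fisher's argument does. Schwarz--Pick rigidity (equality forcing an automorphism) is a statement about holomorphic self-maps of the disc; for a map $F\colon D\to\D$ from a multiply connected planar domain there is no subordination principle converting equality of two extremal values into $F_2=a\circ F_1$ with $a\in\Aut(\D)$. Likewise ``strict convexity of $\rho$'' cannot carry the load: the relevant object is the unit ball of $H^{\infty}(D)$, which is not strictly convex, and indeed if $F_1,F_2$ are two normalized extremals then $(F_1+F_2)/2$ is again a normalized extremal, so the set of extremals is convex and a naive midpoint argument produces no contradiction. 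Fisher's uniqueness proof is a genuinely different variational/duality argument (extremals on finitely connected domains have unimodular boundary values a.e., uniqueness comes from the dual $L^1$ extremal problem, and general domains are handled by exhaustion); your appeal to Proposition~\ref{prop:left} with $V=D$ only yields density of $F(D)$ in $\D$, which is far from uniqueness. So the genuine gap sits exactly at the uniqueness step: if you intend anything beyond quoting Fisher, the rigidity must come from his variational/duality machinery, not from Schwarz--Pick or convexity of $\rho$.
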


 We show that in the case of the planar domains the existence of finite universal sets makes the domain (under some evident assumptions) be the unit disc which is contained in the following.

\begin{theorem}\label{thm:extr-planar} Let $D$ be a domain in $\mathbb C$ that has a finite universal set for the infinitesimal Carath\'eodory problem. Then $D$ has a universal set consisting of one element. In particular, if $D$ is additionally $c$-finitely compact then it is biholomorphic to the unit disc.  
\end{theorem}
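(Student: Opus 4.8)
The plan is to turn the finiteness of the universal set into a finite closed cover of $D$ and then use the uniqueness of planar extremals (Proposition~\ref{prop:extr-planar}) to collapse this cover to a single set. First I would fix the scalar direction: in dimension one the infinitesimal problem at $w$ depends only on $w$, since $\gamma_D(w;X)=|X|\gamma_D(w;1)$, so a finite universal set $\mathcal C=\{F_1,\dots,F_N\}$ must, for each $w\in D$, contain an $F_j$ that is extremal for $(w;1)$. Writing $g_j(w):=\gamma_{\mathbb D}(F_j(w);F_j'(w))=\frac{|F_j'(w)|}{1-|F_j(w)|^2}$, each $g_j$ is continuous, one always has $\gamma_D(w;1)\ge g_j(w)$, and universality gives $\gamma_D(\cdot\,;1)=\max_j g_j$. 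Hence $\gamma_D(\cdot\,;1)$ is continuous and each extremal set $A_j:=\{w\in D:\ g_j(w)=\gamma_D(w;1)\}$ is relatively closed in $D$, with $\bigcup_{j}A_j=D$.

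The decisive step is to feed the global uniqueness of Proposition~\ref{prop:extr-planar} into this cover. If $F_j$ and $F_k$ are both extremal at a common point $w_0\in A_j\cap A_k$, the proposition forces $F_k=m\circ F_j$ for a single $m\in\Aut(\mathbb D)$; since automorphisms of $\mathbb D$ preserve the pull-back of the Poincar\'e metric, this yields $g_j\equiv g_k$ on all of $D$, whence $A_j=A_k$ and $F_k$ is redundant. Declaring $F_j\sim F_k$ when $F_k=m\circ F_j$ for some $m\in\Aut(\mathbb D)$, and keeping one representative of each class occurring in $\mathcal C$, I would obtain finitely many nonempty closed sets that are pairwise disjoint (a common point would make two representatives equivalent) and cover the connected set $D$. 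Connectedness then leaves exactly one class, i.e.\ a single $F$ extremal at every $w\in D$: the desired one-element universal set.

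For the final assertion I would use $c$-finite compactness. It entails $c$-hyperbolicity, hence (in the plane) $\gamma$-hyperbolicity, so $\gamma_D(w;1)>0$ and therefore $F'(w)\neq 0$ for all $w$; thus $F:D\to\mathbb D$ is a local biholomorphism. The universality of $F$ reads $\gamma_D(w;1)=\frac{|F'(w)|}{1-|F(w)|^2}$, i.e.\ $\gamma_D$ equals the pull-back $F^{*}\gamma_{\mathbb D}$ of the Poincar\'e metric. Consequently $F$ is a local isometry from $D$, equipped with the smooth positive Reiffen--Carath\'eodory conformal metric, onto $(\mathbb D,\rho)$. Since $c$-finite compactness makes the associated inner distance finitely compact, hence complete, I would invoke the standard fact that a local isometry from a complete Riemannian manifold onto a connected one is a covering map; as $\mathbb D$ is simply connected this covering is trivial, so $F$ is a biholomorphism and $D$ is biholomorphic to $\mathbb D$.

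The main obstacle is the last paragraph rather than the combinatorial reduction: one must verify that the single infinitesimal extremal genuinely produces a complete local isometry. Two points need care -- that $c$-finite compactness transfers to completeness of the inner Carath\'eodory metric (it does, because that metric dominates $c_D$, so its balls are relatively compact and a Hopf--Rinow argument applies), and that ``a complete local isometry onto a connected target is a covering'' is legitimately available for this conformal metric. If one prefers to avoid the Riemannian covering theorem, an alternative is to show that $F$ is proper, equivalently that $|F|\to 1$ along sequences leaving every compactum, which the isometry together with completeness supplies, and then use that a proper local biholomorphism onto the simply connected $\mathbb D$ is a biholomorphism.
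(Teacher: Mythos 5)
Your reduction to a single extremal is essentially the paper's own argument: combine the uniqueness of planar infinitesimal Carath\'eodory extremals (Proposition~\ref{prop:extr-planar}) with minimality of the universal set and connectedness of $D$; your closed-cover formulation and the paper's strict-inequality (open-cover) formulation are the same connectivity argument, and both are correct. The paper gives no argument at all for the final clause, so your isometry/covering discussion is added detail rather than a divergence; it is sound, and rightly so, since deducing properness of $F$ directly from $c$-finite compactness as in Theorem~\ref{the:univer-fin} would require $F$ to be extremal for the distance version of the problem, which is exactly the gap your completeness-plus-local-isometry route avoids.
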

\begin{proof} Without loss of generality we may assume that $D$ is $\gamma$-hyperbolic. Let $\mathcal C=\{\Phi_1,\ldots,\Phi_N\}$ be a minimal finite universal set for the infinitesimal Carath\'eodory extremal problem. It is sufficient to show that $N=1$. Suppose that $N\geq 2$. Denote $V:=\Phi(D)$. The uniqueness of $\gamma$-extremals and the minimality of $\mathcal C$ imply that for any $z\in D$ there is a $j$ such that
\begin{equation}
\gamma_{\mathbb D}(\Phi_j(z);\Phi_j^{\prime}(z))>\max\{\gamma_{\mathbb D}(\Phi_k(z);\Phi_k^{\prime}(z)),\; k\neq j\}.
\end{equation}
On the other hand the minimality of the set $\mathcal C$ implies that for any $1\leq k\leq N$ there is a $z\in D$ such that 
\begin{equation}
\gamma_{\mathbb D}(\Phi_k(z);\Phi_k^{\prime}(z))>\max\{\gamma_{\mathbb D}(\Phi_j(z);\Phi_j^{\prime}(z)):j\neq k\}.
\end{equation}
Standard connectivity argument shows, however, that both statements cannot hold simultanuously.
\end{proof}

\subsection{Finite universal sets induce embeddings into polydiscs} 
Below we generalize Theorem 2.3 from \cite{Agl-Lyk-You 2018} with a simpler proof.

\begin{theorem}\label{the:univer-fin} Let $D\subset\mathbb C^n$ be $c$-hyperbolic and $\gamma$-hyperbolic. Assume additionally that $\mathcal C=\{\Phi_1,\ldots,\Phi_N\}$ is a universal set 
for the Carath\'eodory extremal problem. Then the mapping
\begin{equation}
\Phi:=(\Phi_1,\ldots,\Phi_N):D\to\mathbb D^N
\end{equation}
is a holomorphic embedding. In particular, $N\geq n$ and $\Phi(D)$ is a connected complex submanifold of dimension $n$. 

Moreover, $\Phi(D)$ is a Carath\'eodory set. In particular we get
\begin{align}
c_{\Phi(D)}(w,z)&=\max\{\rho(\Phi_j(w),\Phi_j(z)):j=1,\ldots,N\},\\
\gamma_{\Phi(D)}(\Phi(w);\Phi^{\prime}(w)(X))&=\max\{\gamma_{\mathbb D}(\Phi_j(w);\Phi_j^{\prime}(w)(X):j=1,\ldots,N\}
\end{align} 
$w,z\in D$, $X\in\mathbb C^n$.

If $D$ is additionally $c$-finitely compact then $\Phi$ is proper so in the case $n=N$ we get that $\Phi(D)=\mathbb D^n$.

\end{theorem}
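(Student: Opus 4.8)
The plan is to establish, in order, the four assertions: injectivity and immersivity of $\Phi$ (so that $\Phi$ is an embedding), the dimension count $N\ge n$, the Carath\'eodory property of $\Phi(D)$ together with the explicit formulas, and finally properness under $c$-finite compactness.

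\textbf{Embedding.} First I would show $\Phi$ is injective. Take $w\ne z$ in $D$. Since $\mathcal C$ is a universal set for the Carath\'eodory problem, there is an index $j$ with $\rho(\Phi_j(w),\Phi_j(z))=c_D(w,z)$. But $D$ is $c$-hyperbolic, so $c_D(w,z)>0$, forcing $\rho(\Phi_j(w),\Phi_j(z))>0$ and hence $\Phi_j(w)\ne\Phi_j(z)$; thus $\Phi(w)\ne\Phi(z)$. The same argument at the infinitesimal level gives immersivity: for $X\ne 0$, $\gamma$-hyperbolicity yields $\gamma_D(w;X)>0$, and (using the remark that a Carath\'eodory universal set is also an infinitesimal one, since $D$ is both $c$- and $\gamma$-hyperbolic) some $\Phi_j$ realizes this value, so $\Phi_j'(w)(X)\ne 0$ and $d\Phi_w$ is injective. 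An injective holomorphic immersion that is a homeomorphism onto its image is a holomorphic embedding; the homeomorphism part I would obtain together with properness in the last step, or note directly that an injective holomorphic immersion of a manifold into $\mathbb C^N$ is an embedding onto a submanifold. This immediately gives $N\ge n=\dim D$ and that $\Phi(D)$ is a connected $n$-dimensional complex submanifold.

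\textbf{Carath\'eodory property and formulas.} For $V:=\Phi(D)$ I would argue $c_V=c_{\mathbb D^N}|_{V\times V}$ and that both equal $c_D$ transported by $\Phi$. Writing $p=\Phi(w)$, $q=\Phi(z)$, the coordinate projections $\pi_j:\mathbb D^N\to\mathbb D$ restrict to competitors in $\mathcal O(V,\mathbb D)$, and $\pi_j\circ\Phi=\Phi_j$, so $c_V(p,q)\ge \sup_j\rho(\Phi_j(w),\Phi_j(z))=c_D(w,z)$, the last equality because $\mathcal C$ is universal. Conversely every $G\in\mathcal O(V,\mathbb D)$ pulls back via $\Phi$ to an element of $\mathcal O(D,\mathbb D)$, giving $c_V(p,q)\le c_D(w,z)$. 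Hence $c_V(p,q)=c_D(w,z)=\max_j\rho(\Phi_j(w),\Phi_j(z))$, which is exactly $c_{\mathbb D^N}(p,q)$ by the product formula for the polydisc; this is the displayed equality and shows $V$ is a Carath\'eodory set in $\mathbb D^N$. The infinitesimal formula follows verbatim with $\gamma$ in place of $c$ and the product formula $\gamma_{\mathbb D^N}(p;\,\cdot\,)=\max_j\gamma_{\mathbb D}(p_j;\,\cdot\,)$.

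\textbf{Properness and the case $N=n$.} Assuming $c$-finite compactness, I would show $\Phi$ is proper: if $\Phi(w_k)\to p\in V$ then, fixing a basepoint $w_0$, the values $c_{\mathbb D^N}(\Phi(w_0),\Phi(w_k))=c_D(w_0,w_k)$ stay bounded, so the $w_k$ lie in a fixed Carath\'eodory ball, which is relatively compact in $D$; thus preimages of compact sets in $V$ are compact, i.e. $\Phi$ is proper onto $V$. A proper injective immersion is a closed embedding, completing the embedding claim. Finally, when $N=n$ the map $\Phi:D\to\mathbb D^n$ is a proper holomorphic map between equidimensional manifolds, hence a finite branched covering; injectivity makes it unramified of degree one, so $\Phi(D)$ is open in $\mathbb D^n$, and properness makes $\Phi(D)$ closed in $\mathbb D^n$. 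Connectedness of $\mathbb D^n$ then forces $\Phi(D)=\mathbb D^n$.

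\textbf{Main obstacle.} I expect the delicate point to be the transition from ``injective holomorphic immersion'' to ``embedding'': in general this needs the map to be a homeomorphism onto its image, which is precisely what properness supplies, but in the first part of the statement properness is \emph{not} assumed. So the cleanest route is to prove that an injective holomorphic immersion of a complex manifold into $\mathbb C^N$ already has image a complex submanifold with $\Phi$ a biholomorphism onto it (a standard fact, provable locally via the rank theorem plus a careful global argument, or by invoking that injective immersions of manifolds are embeddings when one checks the image is locally closed). The rest is routine once the embedding and the universality-driven identity $c_V=c_{\mathbb D^N}|_V$ are in place.
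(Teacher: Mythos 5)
Your proposal follows the paper's proof step for step: injectivity from universality plus $c$-hyperbolicity, immersivity from the infinitesimal universality (via the remark that a universal set is automatically infinitesimally universal on a $c$- and $\gamma$-hyperbolic domain) plus $\gamma$-hyperbolicity, the identity $c_{\mathbb D^N}(\Phi(w),\Phi(z))=\max_j\rho(\Phi_j(w),\Phi_j(z))=c_D(w,z)$ sandwiched against $c_{\Phi(D)}\geq c_{\mathbb D^N}|_{\Phi(D)}$ to get the Carath\'eodory property, and properness from the relative compactness of Carath\'eodory balls. Your spelled-out treatment of the case $N=n$ (proper, open, closed, connectedness) is a correct elaboration of something the paper merely asserts.

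The one point where you diverge is the passage from ``injective holomorphic immersion'' to ``embedding,'' which you rightly flag as the delicate step. Be careful, though: the fallback you invoke --- that an injective holomorphic immersion of a complex manifold into $\mathbb C^N$ is automatically an embedding onto a (locally closed) submanifold --- is false in general; the irrational winding $t\mapsto(e^{t},e^{\alpha t})$ with $\alpha\in\mathbb R\setminus\mathbb Q$ is an injective holomorphic immersion of $\mathbb C$ into $\mathbb C^2$ whose image is not locally closed. The paper's own proof is silent on this point (it only establishes injectivity and full rank), so you are not missing anything the authors supply; but if you want to close the step without assuming $c$-finite compactness, the right tool is the isometry identity you have already proved, $c_D(w,z)=c_{\mathbb D^N}(\Phi(w),\Phi(z))$, combined with a comparison of the topology induced by $c_D$ with the Euclidean topology of $D$ (immediate for bounded $D$ by comparing with a large ball), which makes $\Phi^{-1}$ continuous on $\Phi(D)$ directly rather than through a general immersion theorem.
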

\begin{proof}
The definition of the universal Carath\'eodory set gives 
\begin{multline}
c_D(w,z)=\max\{\rho(\Phi_j(w),\Phi_j(z)):j=1,\ldots,N\}=\\
c_{\mathbb D^N}(\Phi(w),\Phi(z)), \; w,z\in D.
\end{multline}
Let $w,z\in D$ be such that $\Phi(w)=\Phi(z)$. Then $c_D(w,z)=0$ and the $c$-hyperbolicity implies that $w=z$. Therefore, $\Phi$ is injective.
Similarly, because of the fact that $\{\Phi_1,\ldots,\Phi_n\}$ is a universal set for the infinitesimal Carath\'eodory extremal problem we get
\begin{multline}
\gamma_D(w;X)=\max\{\gamma_{\mathbb D}(\Phi_j(w);\Phi_j^{\prime}(w)(X)):j=1,\ldots,N\}=\\
\gamma_{\D^N}(\Phi(w);\Phi^{\prime}(w)(X))\},\, w\in D, X\in\mathbb C^n.
\end{multline}
Since $D$ is $\gamma$-hyperbolic we get that the rank of $\Phi^{\prime}(w)$ is $n$. 

We prove that $V:=\Phi(D)$ is a Carath\'eodory set. This can be seen as follows.
\begin{multline}
c_{\mathbb D^N}(\Phi(w),\Phi(z))=\max\{\rho(\Phi_j(w),\Phi_j(z)):j=1,\ldots,N\}=\\c_D(w,z)=
c_{\Phi(D)}(\Phi(w),\Phi(z))\geq c_{\mathbb D^N}(\Phi(w),\Phi(z)).
\end{multline}

Assume that $D$ is additionally $c$-finitely compact. We show below that $\Phi$ is proper.

Fix $w\in D$ and let $(z^k)_k$ have no accumulation point in $D$. Then the equality
\begin{equation}
c_{\mathbb D^N}(\Phi(w),\Phi(z^k))=c_D(w,z^k)\to\infty
\end{equation}
implies that $(\Phi(z^k))_k$ has no accumulation point in $\mathbb D^N$ which gives the desired properness of $\Phi$.
\end{proof}

\subsection{Not only polydisc admits finite universal sets for the Cara\-th\'e\-o\-do\-ry extremal problem in higher dimension} It is shown in \cite{Agl-Lyk-You 2018} that the projections are contained in any universal Carath\'eodory set of the bidisc. It turns out that under evident assumptions domains $D_{a,b}$ have similar property - the three functions defining $D_{a,b}$ must lie in any universal Carath\'eodory set. Moreover, the domains $D_{a,b}$ are examples of (very nice, for instance $c$-finitely compact) domains which admit the finite universal Carath\'eodory sets and still not being the bidisc. This shows that the situation in the case $n=1$ differs from the case of $n$ bigger. The fact that $D_{a,b}$ is not biholomorphic to the bidisc follows for instance from the fact that the indicatrix of $D_{a,b}$ at $0$ is the domain (see Corollary~\ref{corollary:indicatrix})
\begin{multline}
\{X\in\C^2:\kappa_{D_{a,b}}(0;X)<1\}=\\
\{X\in\C^2:\max\{|X_1|,|X_2|,|aX_1+bX_2|\}<1\},
\end{multline}
which is not linearly isomorphic to the bidisc under the assumption that the triple $\{a,b,1\}$ satisfies the triangle inequality.

\begin{example}\label{example:finite-universal-set} Recall that Agler, Lykova and Young remarked that in the bidisc any universal set for the Carath\'eodory problem must contain (up to an automorphism) both projections. Similar property holds for the domains $D_{a,b}$. More precisely, if the triple $\{a,b,1\}$ satisfies the triangle inequality then
the universal set for the Carath\'eodory extremal problem for the domain $D_{a,b}=\{z\in \D^2:\ |az_1+ bz_2 - z_1 z_2|<|bz_1 + az_2 -1|\}$ contains (up to automorphisms)  three functions: 
\begin{equation} z\mapsto z_1,\ z\mapsto z_2 \text{ and } z\mapsto \frac{az_1 + bz_2 - z_1 z_2}{bz_1 + az_2 -1}. 
\end{equation}
To show above note that it is sufficient to see that any of the functions $z_j$ (up to an automorphism), $j=1,2,3$, must belong to a universal set for the Carath\'eodory problem in $M_{\alpha}$ being not a retract. Take such a variety. Consider the functions as in Remark~\ref{remark:blaschke} 
\begin{equation}
\D\owns\lambda\mapsto(\lambda,\lambda m_{\eta}(\omega\lambda),f_3(\lambda))\in M,
\end{equation}
where $\eta$ is fixed and $\omega$ is from some non-empty arc. Then one of left inverses (call it $F$) of the function for the fixed $\omega$ is a left inverse for all $\omega$ from the given arc. Consequently, the function $F$ depends only on the first variable and this equals $z_1$.


\end{example}

\begin{remark} It follows from \cite{Kos-McC 2019} that any subdomain of $\C^2$ that has a universal set composing of three elements is biholomorphic to a submanifold of $\D^3$ that is a graph of a holomorphic function for each choice of the coordinates. In particular, it is biholomorphic to a domain of the form $D=\{(z_1, z_2)\in \D^2: h(z_1,z_2)<1\}$, where $h$ is a holomorphic function such that 
$z_1\mapsto h(z_1, z_2)$ (respectively $z_2\mapsto h(z_1, z_2)$) is injective  for every $z_2\in \D$ (resp. $z_1\in \D$). Some other properties of $h$ were obtained in \cite{Kos-McC 2019}.  Recall that the results in \cite{Kos-McC 2019} are stated with the additional assumption of polynomial convexity. However, here they can be dropped out. This forces two natural questions. The first one is if any domain with a $3$-element (or finite) universal set comes from the variety $M_{\alpha}$. 

The second one is more particular; namely, whether any domain having finite universal set is a Lempert domain.
\end{remark}

\section{Universal sets for the Carath\'eodory extremal problem for the unit ball $\mathbb B_2$}\label{section:ball}


 
In the last section we make some remarks on the universal sets in the unit ball. The results presented may be seen as the starting point for the further study of a possible structure of (in some at the moment not well determined sense) small universal sets for the Carath\'eodory extremal problem.
 
We start with an easy observation that the unit ball $\mathbb B_n$, $n>1$, does not have a finite universal set for the Carath\'eodory extremal problem. 

\begin{proposition}
The unit ball $\mathbb B_n$, $n>1$, does not have a finite universal set for the Carath\'eodory problem.
\end{proposition}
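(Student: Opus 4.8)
The plan is to argue by contradiction, assuming that $\mathbb B_n$ (with $n>1$) admits a finite universal set $\mathcal C=\{\Phi_1,\ldots,\Phi_N\}$ for the Carath\'eodory extremal problem, and then to exploit Theorem~\ref{the:univer-fin}. Since $\mathbb B_n$ is bounded and convex, it is $c$-hyperbolic, $\gamma$-hyperbolic and $c$-finitely compact, so all hypotheses of Theorem~\ref{the:univer-fin} are satisfied. That theorem tells us that $\Phi=(\Phi_1,\ldots,\Phi_N)$ is a proper holomorphic embedding of $\mathbb B_n$ into $\mathbb D^N$ whose image is a Carath\'eodory set with $c_{\Phi(\mathbb B_n)}\equiv (c_{\mathbb D^N})_{|\Phi(\mathbb B_n)^2}$. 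Thus the whole problem is transported to the polydisc, where the rigidity of complex geodesics can be brought to bear.

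The key structural fact I would use is the description of complex geodesics in the Euclidean ball: for the ball every geodesic through a point $w$ in a direction $X$ is (the affine parametrization of) the intersection of $\mathbb B_n$ with the complex line through $w$ in direction $X$, and the Carath\'eodory extremal function is essentially unique up to an automorphism of $\mathbb D$ (this follows from strict convexity of $\mathbb B_n$). Consequently, the family of extremal directions varies with $w$ in a genuinely $n$-dimensional way: as $X$ ranges over $\mathbb{P}^{n-1}$ at a fixed $w$, the corresponding extremal functions are pairwise inequivalent. I would combine this with the infinitesimal consequence of Theorem~\ref{the:univer-fin}, namely that for every $w$ and every direction $X$ there is an index $j$ with $\gamma_{\mathbb B_n}(w;X)=\gamma_{\mathbb D}(\Phi_j(w);\Phi_j'(w)X)$. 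The main step is then a dimension or covering argument: the continuous map that assigns to each direction $X\in\mathbb P^{n-1}$ the index (or the extremal function) realizing the supremum must, by finiteness of $\mathcal C$, be constant on an open set of directions, forcing two inequivalent ball-geodesics to share the same left inverse. This contradicts the uniqueness of Carath\'eodory extremals in the strictly convex ball.

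An alternative and perhaps cleaner route, which I would pursue in parallel, is to look at the image $V=\Phi(\mathbb B_n)$ as a Carath\'eodory submanifold of $\mathbb D^N$ and to analyze the boundary behaviour. Properness of $\Phi$ means that $V$ is a closed complex submanifold of $\mathbb D^N$ accumulating only on $\partial\mathbb D^N$. Through each point of $V$ and each tangent direction there passes a complex geodesic of $\mathbb D^N$ lying in $V$ (by total geodesy, which the Carath\'eodory property here yields as in Proposition~\ref{prop:tot}); pulling these back through $\Phi^{-1}$ gives complex geodesics of $\mathbb B_n$ whose left inverses are among the finitely many coordinate-type functions $\Phi_j$. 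But the geodesics of $\mathbb B_n$ foliate each tangent space by an $(n-1)$-dimensional projective family, while a finite set of candidate left inverses can account for only finitely many geodesic directions up to equivalence. For $n>1$ this mismatch is impossible.

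The hardest part will be turning the intuitive ``dimension count'' into a rigorous contradiction, since one must rule out the possibility that finitely many $\Phi_j$ conspire to cover a continuum of inequivalent extremal directions by switching which $\Phi_j$ is active from direction to direction. I expect the cleanest way to close this gap is to fix a single base point $w$ and restrict attention to the infinitesimal problem there: the extremal functions for the various directions $X$ at $w$ are determined, up to $\mathrm{Aut}(\mathbb D)$, by an injective continuous assignment from $\mathbb P^{n-1}$ into (equivalence classes of) holomorphic maps $\mathbb B_n\to\mathbb D$, and a finite universal set would factor this injection through a finite set, which is impossible once $\dim\mathbb P^{n-1}=n-1\geq 1$. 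This is exactly where the hypothesis $n>1$ is used, and where the uniqueness statement of Proposition~\ref{prop:extr-planar}'s higher-dimensional analogue for the ball (strict convexity) is indispensable.
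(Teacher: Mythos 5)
Your overall strategy coincides with the paper's: fix the base point $0$, observe that for $n>1$ there is a continuum of complex geodesics (equivalently, of directions in $\mathbb P^{n-1}$) through $0$, and argue that no single function can serve as a Carath\'eodory extremal for two of them, so a finite set cannot suffice. The paper disposes of this in one line by citing Lemma 5 of \cite{Kos-Zwo 2018}: \emph{any two different complex geodesics of $\mathbb B_n$ through $0$ have different (up to automorphisms of $\D$) left inverses}. The detour through Theorem~\ref{the:univer-fin} in your first paragraph is harmless but unnecessary, and the ``covering/dimension'' difficulty you flag at the end is not a real difficulty: once one knows that the sets of extremals attached to distinct geodesics through $0$ are disjoint modulo $\Aut(\D)$, each $\Phi_j$ can be extremal for pairs $(0,z)$ with $z$ on at most one geodesic, so $N$ functions handle at most $N$ of the continuum many geodesics --- a bare pigeonhole, with no topology needed.

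The genuine gap is in the lemma you lean on to close the argument. You assert that in $\mathbb B_n$ ``the Carath\'eodory extremal function is essentially unique up to an automorphism of $\mathbb D$'' as a consequence of strict convexity, and you call this uniqueness ``indispensable''. That statement is false for $n>1$: take $F(z_1,z_2)=z_1+\tfrac 12 z_2^2$ on $\mathbb B_2$. Since $|F(z)|\le |z_1|+\tfrac12(1-|z_1|^2)<1$ on $\mathbb B_2$, $F\in\mathcal O(\mathbb B_2,\D)$, and $F(0)=0$, $F(r,0)=r$ show that $F$ is a Carath\'eodory extremal for the pair $\bigl((0,0),(r,0)\bigr)$ (and an infinitesimal extremal for $(0;e_1)$), yet $F$ is not of the form $m\circ\pi_1$ with $m\in\Aut(\D)$. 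Strict convexity gives uniqueness of the complex geodesic through two points, not uniqueness of its left inverse. What is true, and what both your argument and the paper's actually require, is the weaker disjointness statement of Lemma 5 in \cite{Kos-Zwo 2018}: a single $F\in\mathcal O(\mathbb B_n,\D)$ cannot be a left inverse (equivalently, an extremal) for two distinct geodesics through $0$. That statement is not immediate --- it is a separate result about two-point Pick interpolation in the ball --- so as written your proof rests on a false intermediate claim and needs to be repaired by substituting the correct lemma, after which the contradiction follows at once.
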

\begin{proof} It is a direct consequence of Lemma 5 from \cite{Kos-Zwo 2018} which states the following.

Any two different complex geodesics of the ball passing through $0$ have different (up to automorphisms of the unit disc) left inverses.
\end{proof}

\begin{remark} The result on the existence of many complex geodesics passing through $0$ in the symmetrized bidisc and tetrablock that admit only one (up to an automorphism of the unit disc) left inverse can be found in \cite{Kos-Zwo 2016}. The fact that the symmetrized bidisc does not have a finite universal set follows from Theorem 3.1 in \cite{Agl-Lyk-You 2018}. Consequently, the same holds for the tetrablock.
\end{remark}





The most standard and natural procedure producing a class of the universal set for the Carath\'eodory extremal problem in the unit ball $\mathcal B_n$ is following. As the unit ball is an example of a Lempert domain both notions: extremals and infinitesimal Carath\'eodory extremals coincide. Moreover, the extremals are precisely the ones being the left inverses to complex geodesic, which in turn are parametrizations of portions of complex lines lying in the unit ball. To produce extremal to one of complex geodesic (represented by $l\cap\mathbb B_n$) we may proceed as follows. Let $a\in l\cap\mathbb B_n$ be the point of the minimal norm. Let $\Phi_a$ be the automorphism of the unit ball such that $\Phi_a(a)=0$, $\Phi_a(0)=a$ -- recall that 
\begin{equation}
\Phi_a(z):=.\frac{\sqrt{1-||a||^2}(\langle z,a\rangle a-||a||^2z)-\langle z,a\rangle a+||a||^2a}{||a||^2(1-\langle z,a\rangle)},\; z\in\mathbb B_n
\end{equation}
for all $a\in\mathbb B_n\setminus\{0\}$ and $\Phi_0:=\operatorname{id}_{\mathbb B_n}$.

Now we may apply the unitary mapping $U$ such that $U(\Phi_a(l\cap\mathbb B_n))=\mathbb D\times\{0\}^{n-1}$. And now the mapping $\Psi_l:=U_1\circ\Phi_a$ is one of possible Carath\'eodory extremals for the points from the geodesic $l$. 

In the simplest case of $n=2$ we may apply the above method and we see that the universal set for the Carath\'eodory extremal problem may be chosen in the following way.
\begin{multline}
\mathcal C:=\left\{\frac{\sqrt{1-||a||^2}(\bar a_1z_2-\bar a_2z_1)}{||a||(1-(\bar a_1z_1+\bar a_2z_2))}:a=(a_1,a_2)\in\mathbb B_2\right\}\cup\\
\left\{a_1z_1+a_2z_2:a_1\geq 0,|a_2|^2=1-a_1^2\right\}.
\end{multline}
Let us make one more remark on the properties of the above construction. It follows directly from the construction of $\Psi_l$ that it is a rational mapping that is holomorphic on a neghborhood of $\overline{\mathbb B_n}$ and $\Psi_l^{-1}(\partial \mathbb D)\cap\overline{\mathbb B_n}=l\cap\partial\mathbb B_n$. In other words the universal set for the Carath\'eodory extremal problem $\mathcal C$ just defined is parametrized by complex lines $l$ intersecting $\mathbb B_n$ and it is minimal in the sense that no proper subset of $\mathcal C$ is a universal set. Moreover, any extremal mapping is a left inverse to the unique geodesic. 

It is natural to see whether one could define a class of Carath\'eodory extremals that could be extremals for a wider variety of geodesics, which would yield then a universal set being 'smaller' than the one produced above. Following the construction of extremals from a recent paper \cite{Kos-Zwo 2018}   we shall below get the desired class of functions. We restrict ourselves for the dimension $n=2$.

Let 
\begin{equation}
F(z_1,z_2):=\frac{2z_1(1-z_1)-z_2^2}{2(1-z_1)-z_2^2},\; z\in\mathbb B_2.
\end{equation}
Recall that $F\in\mathcal O(\mathbb B_2,\mathbb D)$ and $F(z_1,0)=z_1$, $z_1\in\mathbb D$ (see \cite{Kos-Zwo 2018}).
\begin{remark} Note that the mapping $F$ just defined assumes the value of absolute value one on a bigger portion of $\partial\mathbb B_2$ than that of the most natural form of extremal mappings ($\Psi_l$ from the previous Section). Actually, note that elementary calculations give the property that for $z\in\partial\mathbb B_2$ the equality $|F(z)|=1$ if and only if
	\begin{equation}
	\operatorname{Im}(z_2(1-\bar z_1))=0.
	\end{equation}
\end{remark}
The above property (fact that the absolute value equal to one is assumed at two dimensional subset of $\partial\mathbb B_2$ suggests that the function $F$ may be a left inverse to a one-dimensional family of complex geodesics). And this is really the case as the next observation shows.

We claim  that for any $t\in\mathbb R$ the mapping $F$ is the left inverse to the mapping (complex geodesic in $\mathbb B_2$)
\begin{equation}
f_t(\lambda):=\left(\frac{t^2+\lambda}{1+t^2},\frac{t(\lambda-1)}{1+t^2}\right),\; \lambda\in\mathbb D.
\end{equation} 
Recall that ($c_D^*:=\operatorname{arctanh} c_D$)
\begin{equation}
c_{\mathbb B_2}^*(w,z)=\sqrt{1-\frac{(1-||w||^2)(1-||z||^2)}{|1-\langle w,z\rangle|^2}},\; w,z\in\mathbb B_2.
\end{equation}
It is elementary then to check that $f_t(\mathbb D)\subset\mathbb B_2$. It is therefore sufficient to show that
\begin{equation}
c_{\mathbb B_2}(F(f_t(\lambda_1),F(f_t(\lambda_2)))=p(\lambda_1,\lambda_2),\;\lambda_1,\lambda_2\in\mathbb D,
\end{equation}
which follows directly form the formula for the Carath\'eodory distance for the unit ball and the Poincar\'e distance.

\textbf{Acknowledgments.} The authors wanted to thank the anonymous referee for comments and corrections that improved the shape of the paper.

\end{document}